\newcommand{\lra}{\longrightarrow}
\newcommand{\thra}{\twoheadrightarrow}
\newcommand{\xla}{\xleftarrow}
\newcommand{\xra}{\xrightarrow}
\newcommand{\ges}{\geqslant}
\newcommand{\ann}[2]{\operatorname{ann}_{#1}(#2)}
\newcommand{\builds}[1][]{\vdash_{#1}\!}
\newcommand{\fbuilds}[1][]{\models_{#1}\!}
\newcommand{\cat}[1]{{\mathsf{#1}}}
\newcommand{\cone}{\operatorname{Cone}}
\newcommand{\dcat}[1]{\cat{D}(#1)}
\newcommand{\dbcat}[2][]{\cat{D}^{\operatorname{fg}}_{\scriptscriptstyle{#1}}(#2)}
\newcommand{\dcatfl}[1]{\cat{D}^{\operatorname{fl}}(#1)}
\newcommand{\sdbcat}[1]{\cat{D}^{\operatorname{b}}(\operatorname{coh}\, #1)}
\newcommand{\env}[2]{{#2}_{#1}^{\operatorname{e}}}
\newcommand{\Ext}[4][{*}]{\operatorname{Ext}^{#1}_{#2}(#3,#4)}
\newcommand{\fdim}{\operatorname{flat\,dim}}
\newcommand{\hh}[1]{\operatorname{H}_{*}(#1)}
\newcommand{\HH}[2]{\operatorname{H}_{#1}(#2)}
\newcommand{\HC}[3][*]{\operatorname{HH}^{#1}(#3/#2)}
\newcommand{\HCM}[4][*]{\operatorname{HH}^{#1}(#3/#2;#4)}
\newcommand{\Image}{\operatorname{Im}}
\newcommand{\Ker}{\operatorname{Ker}}
\newcommand{\Hom}{\operatorname{Hom}}
\newcommand{\kos}[2]{{#1}/\!\!/{#2}}
\newcommand{\lotimes}{\otimes^{\operatorname{L}}}
\newcommand{\perf}[1]{\cat{Perf}(#1)}
\newcommand{\pdim}{\operatorname{proj\,dim}}
\newcommand{\RHom}[3]{\operatorname{RHom}_{#1}(#2,#3)}
\newcommand{\shift}{{\sf\Sigma}}
\newcommand{\spa}[1]{\mathcal{#1}}
\newcommand{\spec}{\operatorname{Spec}}
\newcommand{\Sym}{\operatorname{Sym}}
\newcommand{\supp}[2]{{\operatorname{supp}_{#1}{#2}}}
\newcommand{\thick}[1]{\operatorname{thick}(#1)}
\newcommand{\Tor}[4][{*}]{\operatorname{Tor}_{#1}^{#2}(#3,#4)}
\newcommand{\fa}{{\mathfrak a}}
\newcommand{\fm}{{\mathfrak m}}
\newcommand{\fp}{{\mathfrak p}}
\newcommand{\fq}{{\mathfrak q}}
\newcommand{\fn}{{\mathfrak n}}
\newcommand{\ve}{{\varepsilon}}
\newcommand{\vf}{{\varphi}}
\newcommand{\bsx}{{\boldsymbol x}}
\newcommand{\mcF}{\mathcal{F}}
\newcommand{\mcG}{\mathcal{G}}
\newcommand{\mcN}{\mathcal{N}}
\newcommand{\mcP}{\mathcal{P}}
\newcommand{\wt}{\widetilde}
\newcounter{intro}
\newtheorem{introthm}[intro]{Theorem}
\newtheorem{theorem}[subsection]{Theorem}
\newtheorem{proposition}[subsection]{Proposition}
\newtheorem{lemma}[subsection]{Lemma}
\newtheorem{corollary}[subsection]{Corollary}
\theoremstyle{definition}
\newtheorem{example}[subsection]{Example}
\newtheorem{chunk}[subsection]{ }
\theoremstyle{remark}
\newtheorem{remark}[subsection]{Remark}
\newtheorem*{claim*}{Claim}
\newtheorem*{ack}{Acknowledgements}
\numberwithin{equation}{subsection}
\begin{document}

\title[Locally complete intersections]{Locally complete intersection maps and\\ the proxy small property}

\author[Briggs]{Benjamin Briggs}
\address{Department of Mathematics,
University of Utah, Salt Lake City, UT 84112, U.S.A.}
\email{briggs@math.utah.edu}

\author[Iyengar]{Srikanth B.~Iyengar}
\address{Department of Mathematics,
University of Utah, Salt Lake City, UT 84112, U.S.A.}
\email{iyengar@math.utah.edu}

\author[Letz]{Janina C.~Letz}
\address{Fakult\"at f\"ur Mathematik\\ 
Universit\"at Bielefeld\\ 
33501 Bielefeld\\ 
Germany.}
\email{jletz@math.uni-bielefeld.de}

\author[Pollitz]{Josh Pollitz}
\address{Department of Mathematics,
University of Utah, Salt Lake City, UT 84112, U.S.A.}
\email{pollitz@math.utah.edu}

\thanks{Partly supported by NSF grants DMS-1700985 (SBI and JCL) and DMS-1840190 (JP)}

\date{4th February 2021}

\keywords{locally complete intersection map, factorization of locally complete intersection map, Hochschild cohomology, homotopy Lie algebra, proxy small}
\subjclass[2010]{13B10 (primary); 13D09, 13D03, 14A15, 14A30 (secondary)}

\begin{abstract} 
It is proved that a map $\vf\colon R\to S$ of commutative noetherian rings that is essentially of finite type and flat is locally complete intersection if and only if $S$ is proxy small as a bimodule. This means that the thick subcategory generated by $S$ as a module over the enveloping algebra $S\otimes_RS$ contains a perfect complex supported fully on the diagonal ideal. This is in the spirit of the classical result that $\vf$ is smooth if and only if $S$ is small as a bimodule, that is to say, it is itself equivalent to a perfect complex. The geometric analogue, dealing with maps between schemes, is also established. Applications include simpler proofs of factorization theorems for locally complete intersection maps. 
\end{abstract}

\maketitle

\section*{Introduction}
\label{sec:intro}
This work concerns the locally complete intersection property for maps between commutative noetherian rings. While there are numerous characterizations of this property, see \cite{Avramov:2010}, none are in terms purely of the structure of the derived category as a triangulated category. Our main results, Theorems~\ref{ithm:eft} and \ref{ithm:local}, supply such characterizations. To set the stage for the discussion, let $\vf\colon R\to S$ be a homomorphism of commutative noetherian rings that is flat and essentially of finite type; the latter condition means that $S$ is a localization of a finitely generated $R$-algebra. 

We establish criteria for detecting when $\vf$ is locally complete intersection, analogous to the following criterion for smoothness: $\vf$ is smooth if and only if $S$ is perfect when viewed as a complex over the enveloping algebra $\env RS\colonequals S\otimes_RS$ via the multiplication map $\env RS\thra S$; see \cite[Proposition~(17.7.4)]{Grothendieck:1967} and \cite[Theorem~1]{Rodicio:1990}. The condition that $S$ is perfect is equivalent to the condition that $S$ is isomorphic in $\dcat{\env RS}$, the derived category of $\env RS$, to a bounded complex of finitely generated projective $\env RS$-modules. We prove:

\begin{introthm}
\label{ithm:eft}
Let $\vf\colon R\to S$ be a homomorphism of commutative noetherian rings, flat and essentially of finite type. Then $\vf$ is locally complete intersection if and only if the thick subcategory of $\dcat{\env RS}$ generated by $S$ contains a perfect complex whose support equals that of $S$.
\end{introthm}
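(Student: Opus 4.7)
The plan is to decouple the statement into a purely local assertion and reduce to it. Both the lci condition and the existence of a perfect proxy in $\thick{S}$ are local on $\spec \env RS$: the lci property is by definition local on $\spec S$, and the support of $S$ as an $\env RS$-module is the image of the diagonal $\spec S \hookrightarrow \spec \env RS$. Localizing at a prime $\fq$ of $\env RS$ lying on the diagonal localizes $\vf$ at the corresponding prime of $S$, and the essentially-of-finite-type hypothesis ensures $\env RS$ is noetherian, so the reduction is clean. I would therefore reduce to the case where $\vf\colon (R,\fm)\to(S,\fn)$ is a flat local homomorphism essentially of finite type and invoke the companion local statement Theorem~\ref{ithm:local}.

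For the forward direction, assume $\vf$ is lci. A flat lci map has the property that, on a Zariski neighborhood of the diagonal, the kernel of the multiplication map $\mu\colon\env RS\to S$ is generated by a regular sequence of length equal to the relative codimension. The Koszul complex $K$ on such a sequence is a perfect complex in $\dcat{\env RS}$ that is quasi-isomorphic to $S$ on that neighborhood; in particular it lies in $\thick{S}$, and its support is the vanishing locus of the sequence, which is exactly the diagonal, hence agrees with $\supp{\env RS}{S}$. Since essentially of finite type permits a finite affine cover of the diagonal, local Koszul complexes can be assembled (or one proceeds directly in $\dcat{\env RS}$) to produce a single perfect proxy.

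The converse is the substantive direction. Assume $S$ admits a perfect proxy $P\in\thick{S}$ with $\supp{\env RS}{P}=\supp{\env RS}{S}$. After localizing as above, I would exploit the action of Hochschild cohomology $\HC{R}{S}=\Ext{\env RS}{S}{S}$ on $\dcat{\env RS}$, and in particular on $P$: the nonvanishing of $P$ at every prime of the diagonal, combined with perfection and with $P\in\thick{S}$, should translate into a finite generation (equivalently polynomial-growth) statement for the self-Ext of $S$. Passing through the André--Quillen / homotopy Lie algebra machinery associated to $\vf$, namely Avramov's criterion that $\vf$ is lci if and only if $\pi^n(\vf)=0$ for $n\ge 3$ (equivalently the cotangent complex $L_\vf$ is perfect), yields the lci conclusion. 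This last step is the principal obstacle: converting the triangulated-categorical hypothesis on the bimodule $S$ into a structural finiteness statement on the homotopy Lie algebra of $\vf$, which is where the theorem becomes non-tautological and presumably requires the paper's earlier development of proxy smallness together with Avramov's structure theory.
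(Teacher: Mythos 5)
There are genuine gaps in all three parts of your outline.

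First, the reduction. You propose to localize and then ``invoke Theorem~\ref{ithm:local}'' for the resulting flat local homomorphism $\vf\colon (R,\fm)\to(S,\fn)$ --- but Theorem~\ref{ithm:local} is a statement about \emph{surjective} homomorphisms, and a flat local homomorphism is (essentially) never surjective, so it does not apply to $\vf$ itself. The surjection one must feed into the local theorem is not $\vf$ but the second map $\dot\vf\colon \wt R\to S$ in a factorization $R\to \wt R\to S$ with $\wt R$ a localized polynomial ring. The paper's actual reduction (Theorem~\ref{th:global-eft}) is to show that the dg algebra map $\theta\colon \env RS\to \env{\wt R}{S}$ is a base change of the complete intersection surjection $\env R{\wt R}\to\wt R$, whence proxy smallness of $S$ ascends and descends along $\theta$; this is what converts the bimodule condition over $\env RS$ into one over $\env{\wt R}{S}$, where Theorem~\ref{th:ci-local} applies. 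Your sketch omits this step entirely, and it is not a formality.

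Second, the forward direction as you state it is false. For a flat lci map that is not smooth, the kernel of the multiplication map $\mu\colon\env RS\to S$ is \emph{not} locally generated by a regular sequence, and the Koszul complex $K$ on a generating set is \emph{not} quasi-isomorphic to $S$ near the diagonal; those properties characterize smoothness (e.g.\ for $R=k$, $S=k[x]/(x^2)$ the diagonal ideal of $k[x,y]/(x^2,y^2)$ is $(x-y)$, a zerodivisor). The whole point in the lci case is that $S$ is not perfect but nonetheless satisfies $S\fbuilds[\env RS] K$, and establishing this containment is the substantive content of the proof of \cite[Theorem~9.1]{Dwyer/Greenlees/Iyengar:2006b}, which the paper invokes after reducing to the surjective case. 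Your argument supplies no proof of $S\fbuilds K$. Third, for the converse you correctly identify where the difficulty lies but do not resolve it; moreover the route you gesture at (Avramov's vanishing criterion $\pi^{\ges 3}(\vf)=0$ via the cotangent complex) is precisely the heavy machinery the paper is designed to avoid --- its argument instead factors the surjection through a maximal complete intersection $\wt\vf\colon R\to\wt S$, lifts degree-two Hochschild classes along $\Ext[2]{S}kk\to\Ext[2]{\wt S}kk$ (Lemmas~\ref{le:lift} and \ref{le:key}), and uses torsion of $\Ext{\wt S}{-}k$ over $\HC R{\wt S}$ to force $S$ to be small over $\wt S$ (Lemma~\ref{le:Yo}). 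As it stands the proposal proves neither implication.
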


By the \emph{support} of a complex $W$ in $\dcat{\env RS}$ we mean the set of prime ideals $\fq$ in $\spec(\env RS)$ such that ${\hh W}_\fq\ne 0$. A complex (in the derived category of some ring) is \emph{proxy small} if the thick subcategory it generates contains a perfect complex with the same support; see Section~\ref{sec:ps}. Thus Theorem~\ref{ithm:eft} can be rephrased as: $\vf$ is locally complete intersection if and only if $S$ is proxy small in $\dcat{\env RS}$.

There are other reformulations possible. Indeed, it follows from a result of Hopkins~\cite{Hopkins:1987} that if the thick category generated by $S$ contains a perfect complex with support equal to that of $S$, then it has to contain every perfect complex whose support is contained in that of $S$; see~\ref{ch:HN}. So Theorem~\ref{ithm:eft} is equivalent to the statement $\vf$ is locally complete intersection if and only if $S$ generates the Koszul complex on a finite generating set for the kernel of the multiplication map.

Theorem~\ref{ithm:eft} is a consequence of Theorem~\ref{th:global-eft} that applies to maps of finite flat dimension which is the natural context for the locally complete intersection property. That result is in turn deduced from Theorem~\ref{th:ci-local} concerning surjective homomorphisms; the latter brings out another feature of complete intersections:

\begin{introthm}
\label{ithm:local}
Let $\vf\colon R\to S$ be a surjective homomorphism of finite flat dimension. Then $\vf$ is locally complete intersection if and only if any $S$-complex that is proxy small as an $R$-complex is also proxy small over $S$.
\end{introthm}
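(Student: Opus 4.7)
Since proxy smallness is a local property on $\spec(S)$, I would reduce to the case of a surjective local lci $(R,\fm) \to (S,\fn)$, so $S = R/(\bsx)$ for some $R$-regular sequence $\bsx = x_1, \ldots, x_n$. The key calculation is that the Koszul complex $K_R(\bsx)$ is a free $R$-resolution of $S$, whence
\[
S \lotimes_R S \;\simeq\; K_R(\bsx) \otimes_R S \;\simeq\; \textstyle\bigwedge^*_S S^n
\]
with trivial differential: a perfect $S$-complex, hence lying in $\thick_S(S)$. For any $S$-complex $M$, associativity of the derived tensor product gives $M \lotimes_R S \simeq M \lotimes_S (S \lotimes_R S) \in \thick_S(M)$. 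If $M$ is proxy small over $R$ with witness $P \in \thick_R(M) \cap \perf(R)$ satisfying $\supp{R}{P} = \supp{R}{M}$, then applying $-\lotimes_R S$ yields a perfect $S$-complex $P \lotimes_R S$ in $\thick_S(M \lotimes_R S) \subseteq \thick_S(M)$; its $S$-support is $\supp{R}{P} \cap V(\ker \vf) = \supp{R}{M} = \supp{S}{M}$, the last equality because $M$ is $S$-supported. Thus $M$ is proxy small over $S$.

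\noindent\textbf{The ``if'' direction.} I would assume the proxy-smallness transfer and localize to get $(R,\fm) \to (S,\fn)$ surjective local with $\pdim_R S < \infty$. The crucial input is Pollitz's theorem: a local ring $A$ is complete intersection if and only if its residue field is proxy small in $\dcat A$. The plan is to test the hypothesis on the common residue field $k$ of $R$ and $S$ (they agree since $\vf$ is surjective local). When $R$ is regular, $k$ is perfect over $R$, hence proxy small; the hypothesis then yields $k$ proxy small over $S$, so Pollitz gives that $S$ is a CI ring. Combined with regularity of $R$ and $\pdim_R S < \infty$, one deduces via a Ferrand--Vasconcelos style analysis of the conormal module that $\ker \vf$ is generated by a regular sequence, so $\vf$ is lci.

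\noindent\textbf{Main obstacle.} The subtle step is removing the regularity assumption on $R$. My approach would be a Cohen factorization of the completion $\hat \vf$ as $\hat R \to R' \to \hat S$, with $\hat R \to R'$ flat with regular closed fiber and $R' \to \hat S$ surjective, followed by a Cohen presentation $P \to R'$ from a regular local ring $P$, giving a surjection $P \to \hat S$ of finite flat dimension. The technical heart is to show that the proxy-smallness transfer hypothesis for $\vf$ ascends to $P \to \hat S$, exploiting the behavior of proxy smallness under flat local maps with regular closed fiber. Then the regular-base argument of the previous paragraph gives $P \to \hat S$ lci, whence $\hat \vf$ is lci, and finally $\vf$ is lci. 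Alternatively, one may work directly with the homotopy Lie algebra $\pi^*(\vf)$ via the fundamental long exact sequence linking $\pi^*(R)$, $\pi^*(S)$, and $\pi^*(\vf)$, and use Avramov's criterion ``$\vf$ is lci if and only if $\pi^{\geq 2}(\vf) = 0$'' to translate the proxy-smallness hypothesis directly into this vanishing.
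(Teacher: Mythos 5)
Your ``only if'' direction is correct, and is in effect a direct proof of the ascent statement that the paper simply quotes from Dwyer--Greenlees--Iyengar: the Koszul computation shows $S\lotimes_RS$ is perfect over $S$, hence $M\lotimes_RS$ lies in $\thick M$ in $\dcat S$ for every $S$-complex $M$, and base change of a small proxy finishes the argument. One small caveat: for a non-local $\vf$ you should not ``localize $M$'' (proxy smallness of an arbitrary complex is not obviously a local condition); instead localize only to verify that $S\lotimes_RS$ is perfect over $S$, which is legitimate because its homology is finitely generated, and then run the rest of your argument globally.

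The ``if'' direction has a fatal gap at exactly the step you call the crucial input. Pollitz's theorem is \emph{not} that a local ring $A$ is complete intersection if and only if its residue field is proxy small: the residue field of \emph{every} local ring is proxy small, since $k$ finitely builds the Koszul complex $K$ on a generating set of the maximal ideal, and $K$ builds $k$ because both have support the closed point. The correct statement (Corollary~\ref{cor:josh}) is that $A$ is complete intersection if and only if \emph{every} object of $\dcatfl A$ is proxy small. Consequently, testing the ascent hypothesis on $k$ alone yields no information, and your regular-base argument collapses at its first step.

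Your reduction to a regular base is also structurally impossible, independently of the above. If the ascent hypothesis for $\vf$ implied the ascent hypothesis for a Cohen presentation $P\to\hat S$ with $P$ regular, then, since every object of $\dbcat P$ is perfect and hence proxy small, every object of $\dbcat{\hat S}$ would be proxy small, i.e.\ $\hat S$ would be a complete intersection \emph{ring}. That is strictly stronger than $\vf$ being a complete intersection \emph{map}: take $\vf=\mathrm{id}_S$ for a non-complete-intersection local ring $S$; the hypothesis holds vacuously and $\vf$ is lci, yet $P\to\hat S$ is not. So the hypothesis cannot ascend along the Cohen presentation. The paper's route avoids both problems: it factors $\vf$ as $R\to\wt S\to S$ with $\wt S=R/(\bsx)$ for a maximal regular sequence $\bsx$ in $\Ker(\vf)\setminus\fm\Ker(\vf)$ and shows that $S$ is small over $\wt S$, forcing $S=\wt S$. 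The test objects are not $k$ but the cones $\kos kt$ of maps $t\colon k\to\shift^2k$ in $\Ext[2]Skk$ lifting Hochschild classes $s\in\HC[2]R{\wt S}$ (Lemmas~\ref{le:lift} and \ref{le:key}); these are proxy small over $R$ because $R\to\wt S$ is complete intersection, hence proxy small over $S$ by hypothesis, and this forces $\Ext{\wt S}{{\dot\vf}_*(K)}{k}$ to be torsion for the Hochschild action, whence ${\dot\vf}_*(K)$ is small by Lemma~\ref{le:Yo}. The identification of this family of test complexes, via the action of Hochschild cohomology, is the idea your sketch is missing; your closing remark about $\pi^2(\vf)$ points in the right direction but supplies no bridge from the proxy-smallness hypothesis to the required vanishing.
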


In other words, $\vf$ is complete intersection if and only if proxy smallness \emph{ascends} along $\vf$. The forward implication that proxy smallness ascends for complete intersection maps is the content of \cite[Theorem~9.1]{Dwyer/Greenlees/Iyengar:2006b}, so the result above provides a converse. For this direction, it suffices to test that ascent holds for complexes with finite length homology. When $S$ is local one can even specify a finite collection of $S$-complexes of finite length homology whose proxy smallness detects the complete intersection property for $\vf$; see Theorem~\ref{th:lci-test-complexes}.

The notion of proxy small complexes was introduced in \cite{Dwyer/Greenlees/Iyengar:2006a} as a tool in duality theory. It has since become clear that this concept captures also interesting geometric properties of maps, and that this sheds a new light on factorization theorems. For example, in Section~\ref{sec:applications} we use Theorem~\ref{ithm:local}, more precisely, Theorem~\ref{th:ci-local}, to give simple proofs of some fundamental results concerning the factorization of locally complete intersection maps, first established by Avramov~\cite{Avramov:1999} as a consequence of his solution of a conjecture of Quillen concerning cotangent complexes. 

The statement of Theorem~\ref{ithm:local} and its proof are inspired by a result of the fourth author~\cite[Theorem~5.2]{Pollitz:2019} characterizing local rings that are complete intersection in terms of proxy smallness of complexes, thereby settling a question raised in \cite[Question~9.10]{Dwyer/Greenlees/Iyengar:2006b}. A key new ingredient in our proof is the use of  Hochschild cohomology and its action on derived categories. Recent work~\cite{Letz:2020} of the third author is also critical for it allows us to deduce global statements from local ones. Indeed,   the third author~\cite[Theorem~5.9]{Letz:2020} used \cite[Theorem~5.2]{Pollitz:2019} directly to establish Theorem~\ref{ithm:eft} when $R$ is a field.

Theorems~\ref{ithm:eft} and \ref{ithm:local} extend to morphisms of schemes, but the appropriate notion of proxy smallness involves tensor-generation. This is explained in Section~\ref{se:schemes}. Keeping in mind that geometrically the multiplication map is the diagonal embedding, Theorem~\ref{ithm:eft} readily yields the following result.

\begin{introthm}
\label{ith:lci-spa-maps}
Let $f\colon Y\to X$ be a flat, essentially of finite type, separated, morphism of noetherian schemes, and $\delta \colon Y \to {Y}\times_{X} {Y}$ the diagonal embedding. Then $f$ is locally complete intersection if and only if the thick $\otimes$-ideal of $\dcat{{Y}\times_{X} {Y}}$ generated by $\delta_*{\spa O}_Y$ contains a perfect complex whose support is the diagonal.
\end{introthm}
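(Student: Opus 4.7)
The plan is to reduce the global statement to the affine case of Theorem~\ref{ithm:eft} via a compatible affine open cover of $Y\times_X Y$. First I would choose affine opens $U_\alpha = \spec S_\alpha$ covering $Y$, each mapping into an affine open $V_\alpha = \spec R_\alpha$ of $X$, so that the induced maps $\vf_\alpha\colon R_\alpha \to S_\alpha$ are flat and essentially of finite type. The fibre products $W_\alpha = U_\alpha \times_{V_\alpha} U_\alpha = \spec(\env{R_\alpha}{S_\alpha})$ are open in $Y\times_X Y$; separatedness of $f$ makes $\delta(Y)$ closed, and each $W_\alpha$ meets $\delta(Y)$ in exactly $\delta(U_\alpha)$. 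Being l.c.i.\ is local on $Y$, so the theorem reduces to the statement that, for each $\alpha$, the map $\vf_\alpha$ is l.c.i.\ if and only if the restriction of the geometric hypothesis to $W_\alpha$ holds.

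For the forward direction I would invoke the standard characterization that a flat l.c.i.\ morphism has diagonal $\delta$ a regular closed immersion. Locally on $Y\times_X Y$ this produces a finite Koszul resolution of $\delta_*\spa O_Y$ by finite rank free modules, and since perfectness is local, $\delta_*\spa O_Y$ is itself a perfect complex on $Y\times_X Y$ with support $\delta(Y)$. It lies trivially in the thick $\otimes$-ideal it generates.

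For the converse I would take a perfect complex $P$ in the thick $\otimes$-ideal generated by $\delta_*\spa O_Y$ with support equal to the diagonal, and restrict along the open immersion $j_\alpha\colon W_\alpha \hookrightarrow Y\times_X Y$. Since restriction is an exact symmetric monoidal functor and sends $\delta_*\spa O_Y$ to $S_\alpha$ viewed as a module over $\env{R_\alpha}{S_\alpha}$, the complex $j_\alpha^* P$ is perfect, has support equal to that of $S_\alpha$, and lies in the thick $\otimes$-ideal of $\dcat{\env{R_\alpha}{S_\alpha}}$ generated by $S_\alpha$. Hopkins's classification of thick subcategories of $\perf(\env{R_\alpha}{S_\alpha})$, in the form recorded in \ref{ch:HN}, then puts the Koszul complex on any finite generating set of $\Ker \mu_\alpha$ into the thick subcategory of perfect complexes generated by $j_\alpha^* P$. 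Combined with the reformulation of Theorem~\ref{ithm:eft} noted after its statement, this identifies the geometric condition with $\vf_\alpha$ being l.c.i.

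The step I expect to be the main obstacle is this last affine deduction: restricting the geometric hypothesis yields a condition on the thick $\otimes$-ideal generated by $S_\alpha$, whereas Theorem~\ref{ithm:eft} is phrased in terms of the thick subcategory. The plan for bridging the two is to exploit the fact that $\thick{S_\alpha}$ is already stable under tensoring with perfect $\env{R_\alpha}{S_\alpha}$-complexes and to combine this with Hopkins's classification, so that the perfect complex furnished in the thick $\otimes$-ideal can be promoted into $\thick{S_\alpha}$. Once this translation is secured, the remaining points — separatedness placing $\delta(Y)$ inside $\bigcup W_\alpha$, the local nature of the l.c.i.\ property, and compatibility of restriction to open subschemes with perfectness, supports, and thick $\otimes$-ideals — are routine verifications.
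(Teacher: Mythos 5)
Your converse direction is sound and follows essentially the paper's route: cover the diagonal by the affine opens $W_\alpha=U_\alpha\times_{V_\alpha}U_\alpha$, restrict along the open immersions $j_\alpha$, use that in the affine case every thick subcategory of $\dcat{\env{R_\alpha}{S_\alpha}}$ is automatically a thick $\otimes$-ideal (so no ``promotion'' via Hopkins is actually needed -- the thick $\otimes$-ideal generated by $S_\alpha$ \emph{is} $\thick{S_\alpha}$), and then quote Theorem~\ref{ithm:eft} together with the locality of the l.c.i.\ property.

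The forward direction, however, contains a genuine error. You assert that a flat l.c.i.\ morphism has regular diagonal, so that $\delta_*\spa O_Y$ is itself a perfect complex on $Y\times_XY$. This is false unless $f$ is smooth: by the very result quoted in the introduction (\cite[Proposition~(17.7.4)]{Grothendieck:1967}, \cite[Theorem~1]{Rodicio:1990}), $S$ is perfect over $\env RS$ precisely when $\vf$ is smooth. Concretely, for $R=k$ and $S=k[x]/(x^2)$ the map is (locally) complete intersection, the diagonal ideal of $S\otimes_kS=k[x,y]/(x^2,y^2)$ is generated by $x-y$, and $(x-y)(x+y)=0$, so $x-y$ is a zerodivisor and $\delta$ is not a regular immersion; indeed $\pdim_{S\otimes_kS}S=\infty$. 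The correct forward argument is the one supplied by Theorem~\ref{th:global-eft} (ultimately Theorem~\ref{th:ci-local}\ref{th:ci-local1}$\Rightarrow$\ref{th:ci-local3}, which rests on the proof of \cite[Theorem~9.1]{Dwyer/Greenlees/Iyengar:2006b}): when $\vf_\alpha$ is l.c.i., the module $S_\alpha$ \emph{finitely builds} the Koszul complex on a finite generating set of the diagonal ideal -- a nontrivial fact, not a consequence of resolving $S_\alpha$ -- and that Koszul complex is the perfect complex with the required support. Globalizing this via your cover (plus the complement of $\delta(Y)$, which is open by separatedness and on which everything restricts to zero) gives the forward implication.
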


As with Theorem~\ref{ithm:eft}, but this time using a result of Thomason~\cite[Theorem~4.1]{Thomason:1997}, one can reformulate the theorem above to say that $f$ is locally complete intersection if and only if $\delta_*{\spa O}_Y$ tensor-generates a Koszul complex whose support is the diagonal.

The derived category of a commutative ring has been a valuable source of inspiration for results, if not also their proofs, in other tensor triangulated categories, like the category of spectra, or the stable category of modular representations of finite groups, and also in triangulated categories arising in non-commutative geometry. Theorems~\ref{ithm:eft} and \ref{ithm:local} open the way to exploring notions of complete intersection rings and maps in these categories; see Remark~\ref{re:general-lci}.

\begin{ack}
It is our pleasure to thank  Jian Liu and the referees for their comments and suggestions on an earlier version of this manuscript.
\end{ack}

\section{Proxy smallness}
\label{sec:ps}
This section is mainly a collection of definitions and  observations concerning proxy small objects in derived categories of rings and dg (= differential graded) algebras. Although the main results involve only rings, their proofs exploit dg structures extensively. We take \cite{Avramov:2010, Avramov/Iyengar/Lipman/Nayak:2010} as our basic references on this topic. By default the grading will be lower, so differentials decrease degree.

Throughout $A$ will be a dg algebra concentrated in non-negative degrees. Given a dg $A$-module $M$ we view its homology $\hh M$ as a graded $\hh A$-module. When we speak of elements in a graded object  only  homogeneous elements are considered.

A dg algebra $A$ is equipped with an augmentation map $A\lra \HH 0A$. It is a map of dg algebras where $\HH 0A$, like any ring, is viewed as a dg algebra concentrated in degree zero with zero differential. Through this map any dg $\HH 0A$-module (that is to say, a complex of $\HH 0A$-modules) inherits a structure of a dg $A$-module.

We write $\dcat{A}$ for the derived category of left dg $A$-modules, with its canonical structure as a triangulated category, equipped with suspension functor $\shift$. 

\begin{chunk}
A \emph{thick} subcategory of $\dcat{A}$ is a triangulated subcategory closed under retracts. As the intersection of thick subcategories is again a thick subcategory, for each object $M$ of $\dcat{A}$ there exists a smallest thick subcategory, with respect to inclusion, containing $M$; we denote it $\thick M$. See \cite[\S2]{Avramov/Buchweitz/Iyengar/Miller:2010a} for a constructive description of this category. Following \cite{Dwyer/Greenlees/Iyengar:2006a,Dwyer/Greenlees/Iyengar:2006b} we say that a dg module $N$ is \emph{finitely built} from $M$, or that $M$ \emph{finitely builds} $N$, if $N$ is in $\thick M$. This situation is indicated by writing $M\fbuilds[A] N$; we drop the $A$ from the notation if the ambient category is unambiguous.

A \emph{localizing} subcategory of $\dcat{A}$ is a triangulated subcategory closed under arbitrary coproducts; such a subcategory is thick. Once again mimicking \cite{Dwyer/Greenlees/Iyengar:2006a,Dwyer/Greenlees/Iyengar:2006b}, we write $M\builds[A] N$ to indicate that $N$ is in the localizing subcategory generated by $M$, and say $M$ \emph{builds} $N$, or that $N$ is \emph{built} by $M$.

It is straightforward to verify that the relations $\builds$ and $\fbuilds$ are transitive; this will be used without further mention. Evidently if $M\fbuilds N$ then $M\builds N$; the converse does not hold for arbitrary pairs of dg $A$-modules.
\end{chunk}

\begin{chunk}
\label{ch:perfectbuilding} 
All objects in $\dcat{A}$ are built from $A$; in symbols: $A\builds M$ for any $M$ in $\dcat{A}$. This is a restatement of the fact that every dg module has a semifree resolution; see~\cite[Chapter~6]{Felix/Halerpin/Thomas:2001}. It has long been known that the objects that are finitely built from $A$, that is to say the perfect dg modules, are precisely the `small objects' of $\dcat{A}$; see, for example, \cite[Theorem~2.1.3]{Hovey/Palmieri/Strickland:1997}. Recall that a dg $A$-module $M$ is \emph{small} (or \emph{compact}) provided $\Hom_{\dcat A}(M,-)$ commutes with arbitrary direct sums. When $M$ and $N$ are small dg $A$-modules
\begin{equation}
\label{eq:compact}
 M\builds N\quad \text{implies}\quad M\fbuilds N\,.
\end{equation}
For a proof see for example \cite[Lemma~2.3]{Neeman:1992a} and also \cite[Corollary~3.14]{Rouquier:2008}.
\end{chunk}

\begin{chunk}
\label{ch:rest}
Let $\vf \colon A\to B$ be a morphism of dg algebras. If $M\builds[B] N$ or $M \fbuilds[B] N$ then $M \builds[A] N$ or $M \fbuilds[A] N$, respectively, viewing $M$ and $N$ as dg $A$-modules by restricting scalars along $\vf$. 
\end{chunk}

\begin{chunk}
\label{ch:proxy}
As in \cite{Dwyer/Greenlees/Iyengar:2006a}, a dg $A$-module $M$ is \emph{proxy small} if there exists a small dg $A$-module $K$ such that $M\fbuilds K$ and $K\builds M$. We say that $K$ is a \emph{small proxy} for $M$. Evidently small objects are proxy small. When $A$ is a commutative noetherian ring, some of these conditions can be expressed in terms of support; see \ref{co:proxy-support}.
\end{chunk}

The following definition is central to this work. 

\begin{chunk}
\label{ch:ascent-defn}
Let $\vf\colon A\to B$ be a morphism of dg algebras. We say that \emph{proxy smallness ascends along $\vf$} if each dg $B$-module that is proxy small in $\dcat A$ is proxy small in $\dcat B$. The phrase \emph{proxy smallness descends along $\vf$} means that each proxy small dg $B$-module is also proxy small in $\dcat A$. Often the focus will be on ascent (or descent) of proxy smallness for dg modules in a subcategory $\cat C$ of $\dcat B$, and then we speak of proxy smallness of objects in $\cat C$ ascending/descending along $\vf$. 

For example, proxy smallness ascends and descends along $\vf$ if it is a quasi-isomorphism, for then the base change functor $F\colonequals B\lotimes_A - \colon \dcat A\to \dcat B$ is an exact equivalence of categories with quasi-inverse the restriction functor. 
\end{chunk}

From the definition it is clear that whether an object in $\dcat A$ is small or not depends only the structure of $\dcat A$ as a triangulated category. The following remark is obvious, but also obviously useful.

\begin{chunk}
\label{ch:proxy-tensor-closed}
Let $A$ and $B$ be dg algebras and $F \colon \dcat{A} \to \dcat{B}$ an exact functor preserving coproducts and small objects. If $M$ is proxy small in $\dcat{A}$ with small proxy $K$, then $F(M)$ is proxy small in $\dcat{B}$ with small proxy $F(K)$. The converse holds if $F$ is an exact equivalence.
\end{chunk}

\begin{lemma}
\label{le:h0}
Let $A$ be a dg algebra. The following statements hold.
\begin{enumerate}[\rm(1)]
\item\label{le:h0part1} The dg $A$-module $\HH 0A$ builds any $M$ in $\dcat A$ with $\HH iM=0$ for $|i|\gg 0$.
\item\label{le:h0part2} If $\HH iA=0$ for $i\gg 0$, then $\HH{0}{A}$ is proxy small if and only if $\HH{0}{A}\fbuilds A$. 
\end{enumerate}
\end{lemma}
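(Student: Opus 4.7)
The plan for (1) is to combine two ingredients. First, every object in $\dcat{\HH{0}{A}}$ is built by $\HH{0}{A}$ via \ref{ch:perfectbuilding} applied to the ring $\HH{0}{A}$; restricting along the augmentation $A\to\HH{0}{A}$ and invoking \ref{ch:rest} therefore gives $\HH{0}{A}\builds \shift^n\HH{n}{M}$ in $\dcat A$ for every integer $n$, viewing each homology module as a dg $A$-module concentrated in a single degree via the augmentation. Second, because $A$ is concentrated in non-negative degrees, the soft truncation $\tau_{\leq n}M$ is a genuine sub-dg-$A$-module of $M$; the crucial point is $A_{-1}=0$, which ensures that multiplication by $A_0$ preserves cycles in the top surviving degree. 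The inclusions $\tau_{\leq n-1}M\hookrightarrow \tau_{\leq n}M$ then fit into triangles in $\dcat A$ of the form
$$\tau_{\leq n-1}M\lra \tau_{\leq n}M\lra \shift^n\HH{n}{M}\lra \shift\tau_{\leq n-1}M\,.$$
Since $M$ has bounded homology, only finitely many such truncations separate $0$ from $M$, and induction on the number of nonzero homology modules yields $\HH{0}{A}\builds M$.

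For (2), the forward direction is the substantive one. Assume $\HH{0}{A}$ is proxy small and let $K$ be a small proxy, so that $\HH{0}{A}\fbuilds K$ and $K\builds \HH{0}{A}$. Under the hypothesis that $\HH{i}{A}=0$ for $i\gg 0$, and with $A$ concentrated in non-negative degrees, $A$ has bounded homology, so part~(1) applied to $M=A$ yields $\HH{0}{A}\builds A$. Transitivity gives $K\builds A$. Both $K$ and $A$ are small in $\dcat A$, so \eqref{eq:compact} upgrades this to $K\fbuilds A$, and combining with $\HH{0}{A}\fbuilds K$ delivers $\HH{0}{A}\fbuilds A$.

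For the converse in (2), the dg module $A$ itself is a small proxy for $\HH{0}{A}$: it is small, it is finitely built from $\HH{0}{A}$ by the standing assumption, and it builds $\HH{0}{A}$ by \ref{ch:perfectbuilding}. The main technical obstacle in this plan sits in part (1), specifically the verification that soft truncation produces a dg $A$-submodule when $A$ is non-negatively graded and that the resulting cofiber is quasi-isomorphic to the shifted homology module; once those foundations are in place, part (2) is a routine chaining of the building relations collected earlier in the section.
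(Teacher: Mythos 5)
Your proof is correct and takes essentially the same route as the paper: part (1) is the same induction on the number of nonzero homology modules via soft-truncation triangles (the paper peels off the bottom homology module rather than the top), and part (2) is the identical chain $\HH 0A\builds A$, hence $K\builds A$, upgraded to $K\fbuilds A$ by \eqref{eq:compact}. One small bookkeeping point: with homological grading the truncation retaining homology in degrees $\le n$ is naturally a \emph{quotient} dg $A$-module (the sub-dg-module is the one retaining homology in degrees $\ge n$), but the non-negativity of $A$ is indeed the point that makes either soft truncation work at the dg level, and the resulting exact triangle is as you state.
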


\begin{proof}
Set $B\colonequals\HH{0}{A}$ and let $\ve\colon A\to B$ be the augmentation map.

\ref{le:h0part1} We verify this claim by an induction on the number of nonzero homology modules in $M$. Set $i\colonequals \inf \hh M$; we may assume this is finite, else $M\simeq 0$. In $\dcat A$ soft truncation yields an exact triangle 
\[
N \lra M \lra \shift^i \HH iM \lra
\]
where the induced map $\HH nN \to \HH nM$ is bijective for $n\ne i$ and $\HH iN=0$. The dg $A$-module structure on $\HH iM$ is the one induced via $\ve$. Now $B\builds \HH iM$ in $\dcat B$ and hence also in $\dcat A$. Since $N$ has one fewer nonzero homology modules than $M$ the induction hypothesis yields that $B\builds[A] N$. The exact triangle above then implies that $B\builds[A] M$.
 
\ref{le:h0part2} The non-trivial implication is that when $B$ is proxy small it finitely builds $A$. Suppose $K$ is a small proxy for $B$; in particular, $K$ builds $B$. By part \ref{le:h0part1} and the boundedness hypothesis, $B$ builds $A$, and it follows that so does $K$. However $K$ and $A$ are both small objects so $K$ finitely builds $A$; see \eqref{eq:compact}. As $B\fbuilds K$, transitivity implies once again that $B\fbuilds A$.
\end{proof}

\begin{example}
The preceding result does not extend to dg algebras $A$ with $\HH iA\ne 0 $ for infinitely many $i$. For example, if $R$ is any commutative ring and $A\colonequals R[x]$, viewed as a dg algebra with $|x|\ge 1$ and zero differential, then $R=\HH 0A$ is small in $\dcat A$, and so proxy small. However it does not build $A$, let alone finitely.
\end{example}

\begin{proposition}
\label{pr:local-to-global}
Let $R$ be a commutative noetherian ring, $A$ a dg $R$-algebra, and let $M,N$ be dg $A$-modules. Assume the $R$-modules $\hh A$, $\hh M$, and $\hh N$ are finitely generated. Then $M\fbuilds[A]N$ if and only if $M_\fp\fbuilds[A_\fp]N_\fp$ for each $\fp$ in $\spec R$.
\end{proposition}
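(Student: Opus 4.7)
The forward implication is routine: localization at $\fp$ defines an exact, coproduct-preserving functor $(-)_\fp\colon \dcat{A}\to \dcat{A_\fp}$ that sends $M$ to $M_\fp$ and $N$ to $N_\fp$. Since any exact functor preserves exact triangles and retracts, it sends $\thick_{A}(M)$ into $\thick_{A_\fp}(M_\fp)$, so $M\fbuilds[A] N$ forces $M_\fp\fbuilds[A_\fp] N_\fp$ for every $\fp$.

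For the converse, I would work with Rouquier's notion of level. Set $\ell(\fp)\colonequals \operatorname{level}^{A_\fp}_{M_\fp}(N_\fp)$, the least $n$ such that $N_\fp$ lies in the $n$-th step of the thickification of $M_\fp$; by hypothesis $\ell(\fp)<\infty$ for all $\fp$. The plan is to show first that $U_n\colonequals\{\fp\in\spec R : \ell(\fp)\le n\}$ is Zariski open, and then to upgrade the resulting bound to a global level bound on $N$ over $M$.

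Openness uses the finite-generation hypotheses crucially: a level-$n$ construction of $N_\fp$ from $M_\fp$ is a finite diagram of morphisms, exact triangles, and retracts in $\dcat{A_\fp}$, and since $\hh A$, $\hh M$, $\hh N$ are finitely generated over $R$, the relevant morphism sets between bounded-below representatives with finitely generated homology are finitely generated over $R$. Hence each morphism in the construction is represented, after multiplication by some $s\notin\fp$, by a morphism in $\dcat{A_s}$; clearing denominators yields a single $g\in R\setminus\fp$ for which the entire construction is defined in $\dcat{A_g}$, so $D(g)\subseteq U_n$. Since $\spec R$ is noetherian and $\spec R=\bigcup_n U_n$, we obtain $U_n=\spec R$ for some $n$, i.e.\ $\ell(\fp)\le n$ uniformly in $\fp$.

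The main obstacle, which I expect to be the most delicate part of the argument, is to pass from this uniform local level bound to the global statement $N\in\thick_{A}(M)$. My plan is to cover $\spec R$ by finitely many $D(g_i)$ on each of which a level-$n$ build of $N_{g_i}$ from $M_{g_i}$ is realized, and then glue: one recovers $M$ and $N$ from their localizations via \v{C}ech/Koszul triangles involving the $g_i$, so inductively the global level of $N$ over $M$ is bounded in terms of $n$ and the number of opens in the cover. More slickly, one can appeal to the local-to-global machinery developed by the third author in \cite{Letz:2020}, which the introduction explicitly flags as the key technical input for deducing global statements from local ones.
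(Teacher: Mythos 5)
Your forward implication is fine, and your closing appeal to \cite{Letz:2020} is in fact exactly what the paper does: its entire proof is the instruction to mimic the argument for Theorem~3.6 of that paper, or else to invoke the local-to-global principle of Benson--Iyengar--Krause for the $R$-linear triangulated category of dg $A$-modules with homology finitely generated over $R$. So as a proof-by-citation your proposal lands where the paper does.

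The self-contained argument you sketch, however, has a genuine gap at exactly the step you flag as delicate, and it is worth being precise about why. Suppose you have covered $\spec R$ by opens $D(g_i)$ with $M_{g_i}\fbuilds[A_{g_i}]N_{g_i}$, hence $M_{g_i}\fbuilds[A]N_{g_i}$ by restriction. The localization $M_{g_i}=M[g_i^{-1}]$ is a filtered homotopy colimit of copies of $M$, so $M\builds[A]M_{g_i}$ but $M\fbuilds[A]M_{g_i}$ fails in general (already $R\fbuilds[R]R_g$ fails for a nonunit $g$). Consequently the \v{C}ech/Koszul descent expressing $N$ as a finite limit of the $N_{g_{i_0}\cdots g_{i_k}}$ only yields $M\builds[A]N$, i.e.\ membership in the localizing subcategory, not in $\thick{M}$; no induction on the number of opens repairs this. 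The two standard ways to close the gap are: (i) the ghost lemma, which is how \cite{Letz:2020} proceeds --- $\operatorname{level}^A_M(N)\le n$ is equivalent to the vanishing of all $(n-1)$-fold compositions of $M$-ghost maps out of $N$, and vanishing of a morphism between dg modules with finitely generated homology can be checked prime by prime via $\Hom_{\dcat A}(X,Y)_\fp\cong\Hom_{\dcat{A_\fp}}(X_\fp,Y_\fp)$ (this is where the hypotheses on $\hh A$, $\hh M$, $\hh N$ really enter); or (ii) the Benson--Iyengar--Krause principle, which replaces localizations by Koszul objects $\kos N{\fp}$ --- these \emph{are} finitely built from $N$ --- and runs a noetherian induction on supports. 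Your openness/spreading-out step would also need more care as written (spreading a retract, i.e.\ an idempotent, from $\dcat{A_\fp}$ to some $\dcat{A_g}$ is not automatic), but that step becomes unnecessary once either (i) or (ii) is in place.
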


\begin{proof}
One way to verify this is to mimic the argument for \cite[Theorem~3.6]{Letz:2020} to get the desired result. Another is to invoke the local-to-global principle~\cite[Theorem~5.10]{Benson/Iyengar/Krause:2015}, for the triangulated category consisting of dg $A$-modules with homology finitely generated over $R$, viewed as an $R$-linear category.
\end{proof}

In this work the focus is on proxy small objects in $\dcat R$, the derived category of a commutative noetherian ring $R$. Next we recollect some results specific to this context. We write $\dbcat R$ for the subcategory of $\dcat R$ consisting of $R$-complexes $M$ for which the $R$-module $\hh M$ is finitely generated. Similarly we write $\dcatfl R$ for the subcategory of $R$-complexes $M$ for which $\hh M$ has finite length. 

\begin{chunk}
\label{ch:supp}
Let $R$ be a commutative noetherian ring. The \emph{support} of an object $M$ in $\dcat R$ is the subset of $\spec R$ given by
\[
\supp RM\colonequals \{\fp\in\spec R\mid k(\fp)\lotimes_R M\not\simeq 0\}
\]
where $k(\fp)$ is the residue field $R_\fp/\fp R_\fp$ at $\fp$. For $M\in \dbcat R$ one has
\[
\supp RM = \{\fp\in \spec R\mid {\hh M}_\fp \ne 0\} = V(\ann R{\hh M})
\]
and hence it is a closed subset of $\spec R$. For example, if $K$ is the Koszul complex on a finite generating set for an ideal $I$, then $\supp RK=V(I)$.

For any subset $U$ of $\spec R$, the set of $R$-complexes $M$ with $\supp RM\subseteq U$ is a localizing subcategory of $\dcat R$. In particular if $M\builds[R] N$, then $\supp RM\supseteq \supp RN$. It follows that if $M$ is proxy small, then $\supp RM$ is a closed subset of $\spec R$. 
\end{chunk}

The observation above relating supports to building has a  converse, established by Neeman~\cite[Theorem~2.8]{Neeman:1992b}.

\begin{chunk}
\label{ch:Neeman}
If $M,N$ are objects in $\dcat R$ with $\supp RM\supseteq \supp RN$, then $M\builds[R]N$. 
\end{chunk}

Using this result and \eqref{eq:compact} Neeman deduces the result below concerning finite building, first proved by Hopkins~\cite{Hopkins:1987} using different techniques.

\begin{chunk}
\label{ch:HN}
If $M,N$ are small objects in $\dcat R$ with $\supp RM\supseteq \supp RN$, then $M\fbuilds[R]N$. 
\end{chunk}

The preceding results imply the following characterization of proxy small objects.

\begin{corollary}
\label{co:proxy-support}
Let $R$ be a commutative noetherian ring and $M$ an $R$-complex. Then the following are equivalent
\begin{enumerate}[\rm(1)]
\item $M$ is proxy small;
\item $M$ finitely builds a small object with support equal to $\supp RM$;
\item $\supp RM$ is a closed subset of $\spec R$ and $M$ finitely builds the Koszul complex on a finite subset $\bsx$ of $R$ for which $V(\bsx) = \supp RM$. \qed
\end{enumerate}
\end{corollary}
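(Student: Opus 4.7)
The plan is to prove the cycle $(1)\Rightarrow(2)\Rightarrow(3)\Rightarrow(1)$, leveraging the support dictionary assembled in \ref{ch:supp}, together with Neeman's building criterion \ref{ch:Neeman} and the Hopkins--Neeman classification \ref{ch:HN}. None of the steps should present a genuine obstacle; the subtlety is only to make sure the small proxies produced along the way have \emph{exactly} the right support, not just a containment.

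For $(1)\Rightarrow(2)$ I would unpack the definition in \ref{ch:proxy}: if $K$ is a small proxy for $M$, then $M\fbuilds K$ and $K\builds M$. By \ref{ch:supp}, building reverses support inclusion, so the first relation gives $\supp RK\subseteq \supp RM$, and the second gives the reverse inclusion. Hence $K$ is a small object finitely built by $M$ with $\supp RK=\supp RM$, which is exactly (2).

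For $(2)\Rightarrow(3)$, note that any small object in $\dcat R$ is perfect (\ref{ch:perfectbuilding}), hence lies in $\dbcat R$, so its support is closed in $\spec R$ by \ref{ch:supp}; thus $\supp RM$ is closed. Because $R$ is noetherian, I may write $\supp RM=V(\bsx)$ for a finite subset $\bsx\subset R$, and the Koszul complex $K'$ on $\bsx$ is then a small object with $\supp RK'=V(\bsx)=\supp RM$. Applying the Hopkins--Neeman result \ref{ch:HN} to the two small objects $K$ and $K'$ with equal supports yields $K\fbuilds K'$, and transitivity of $\fbuilds$ gives $M\fbuilds K'$.

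For $(3)\Rightarrow(1)$, the Koszul complex $K'$ on $\bsx$ is a small object, and since $\supp RK'=V(\bsx)=\supp RM$, Neeman's result \ref{ch:Neeman} yields $K'\builds M$. Combined with the hypothesis $M\fbuilds K'$, this exhibits $K'$ as a small proxy for $M$, so $M$ is proxy small. Throughout, the only non-mechanical point is the passage from a closed support to a Koszul complex with matching support, which is immediate from noetherianity.
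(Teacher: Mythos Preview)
Your argument is correct and matches the approach the paper intends: the corollary is stated with a \qed and no explicit proof, the preceding sentence indicating it follows directly from \ref{ch:supp}, \ref{ch:Neeman}, and \ref{ch:HN}, which is exactly the chain of implications you have written out.
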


\section{Hochschild cohomology}
In this section we discuss the (derived) enveloping algebras and Hochschild cohomology of dg $R$-algebras; \cite{Avramov/Iyengar/Lipman/Nayak:2010} is a suitable reference for this material. We are going to be interested in two aspects: One is Hochschild cohomology as a source of operators on the derived category of $A$. The other is the smallness and proxy smallness of $A$ as a module over $\env RA$. In what follows dg algebras will be assumed to be graded-commutative: $a\cdot b = (-1)^{|a||b|}b\cdot a$ for $a,b$ in $A$. 

\begin{chunk}
\label{ch:over}
Given morphisms of dg $R$-algebras $\beta\colon B\to A$ and $\zeta\colon C\to A$, a morphism $\phi\colon B\to C$ is \emph{over} $A$ if $\zeta \phi = \beta$. We say that $B$ and $C$ are quasi-isomorphic \emph{over} $A$ to mean that there is a zig-zag of quasi-isomorphisms over $A$ linking $B$ to $C$. Given the discussion in \ref{ch:ascent-defn} in this situation it is easy to see that $A$ is small, respectively proxy small, in $\dcat B$ if and only if it is small, respectively proxy small, in $\dcat C$.
\end{chunk}

\begin{chunk}
\label{ch:enveloping}
Let $A$ be a dg $R$-algebra and 
\[
\env RA\colonequals A \lotimes_R A\,,
\]
its derived enveloping algebra. When the graded $R$-module underlying $A$ is flat  the canonical map $\env RA \xra{\sim} A\otimes_RA$ is a quasi-isomorphism.

A dg $R$-algebra $A'$ is \emph{homotopically flat} if  tensoring with $A'$ over $R$ preserves quasi-isomorphisms. If $\ve'\colon A'\thra A$ is a homotopically flat dg $R$-algebra resolution of $A$ then  $\env RA$ is represented by $A'\otimes_R A'$. Different homotopically flat resolutions yield dg algebras that are quasi-isomorphic over $A$:  If $\ve''\colon A''\thra A$ is another homotopically flat resolution, then $A'\otimes_R A'$ and $ A''\otimes_RA''$ are quasi-isomorphic over $A$. For this reason we write $\mu\colon \env RA\to A$ to denote a representative of the map 
\[
A'\otimes_R A' \xra{ \ve'\otimes \ve'} A\otimes_R A \xra{\ a\otimes b \mapsto ab\ } A\,.
\]

From \ref{ch:over} it follows that the property that $A$ is small, or proxy small, in $\dcat{\env RA}$ is independent of the choice of a homotopically flat resolution of $A$. In fact, this condition is equivalent to $A$ being small, respectively, proxy small, as a dg $B$-module for any dg $R$-algebra $B$ quasi-isomorphic to $\env RA$ over $A$.
\end{chunk}

\begin{chunk}
\label{ch:hh-action}

The Hochschild, or Shukla, cohomology of a dg $R$-algebra $A$ with coefficients in a dg $A$-module $M$ is 
\[
\HCM RAM \colonequals \Ext{\env RA}{A}{M}\,,
\]
where $A$ is viewed as a dg $\env RA$-module via $\mu$. We abbreviate $\HCM RAA$ to $\HC RA$. This is a graded-commutative $R$-algebra. In what follows we exploit the fact that it acts on $\dcat A$, in the sense of \cite{Krause/Ye:2011}. This action comes about as follows: For any class $\alpha$ in $\HC RA$ and $M$ a dg $A$-module, let 
\[
\chi_M(\alpha) \colon M\to \shift^{|\alpha|}M
\]
be the morphism in $\dcat{A}$ defined by the commutative diagram
\[
\begin{tikzcd}[row sep=4mm]
	A\lotimes_AM  \ar[r,"\alpha \lotimes M "]\ar[d,"\simeq"'] & \shift^{|\alpha|}A\lotimes_AM \ar[d,"\simeq"]\\
	M\ar[r,"\chi_M(\alpha)"] & \shift^{|\alpha|}M\,.
\end{tikzcd}
\]
Thus we get a homomorphism of graded $R$-algebras
\[
\chi_M\colon \HC RA\lra \Ext AMM\,,
\]
called the \emph{characteristic map of $M$}. We denote this $\chi^A_M$ when the dg algebra $A$ needs to be specified. This map has the property that for $N$ in $\dcat A$ and element $\zeta \in \Ext AMN$ one has 
\[
\chi_N(\alpha) \zeta = (-1)^{|\alpha||\zeta|} \zeta \chi_M(\alpha) \,.
\]
In particular $\chi_M(\alpha)$ lies in the graded-center of $\Ext AMM$; see \cite{Krause/Ye:2011} for details. 
\end{chunk}

\begin{chunk}
Fix an $\alpha$ in $\HC RA$ and an $M$ in $\dcat A$. We write $\kos M{\alpha}$ for $\cone(\chi_M(\alpha))$, so there is an exact triangle 
\[
M\xra{\ \chi_M(\alpha)\ }\shift^{|\alpha|}M\lra \kos M{\alpha}\lra
\]
in $\dcat A$. The result below is one of the main reasons for our interest for the action of Hochschild cohomology on $\dcat A$. We do not know if such a statement holds when $\alpha$ is an arbitrary element in the center of $\dcat A$. 
\end{chunk}

\begin{lemma}
\label{le:proxy-central}
If $M$ (finitely) builds $N$, then $\kos M{\alpha}$ (finitely) builds $\kos{N}{\alpha}$. In particular, if $M$ is proxy small then so is $\kos M{\alpha}$.
\end{lemma}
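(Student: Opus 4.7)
The plan is to exploit the naturality of the characteristic morphisms $\chi_{-}(\alpha)$. For any morphism $f\colon L\to L'$ in $\dcat{A}$, the centrality relation from~\ref{ch:hh-action}, applied with $\zeta=f$ of degree zero, gives the commutative square
\begin{equation*}
\begin{tikzcd}
L \ar[r, "\chi_L(\alpha)"]\ar[d, "f"'] & \shift^{|\alpha|} L\ar[d, "\shift^{|\alpha|}\!f"]\\
L' \ar[r, "\chi_{L'}(\alpha)"'] & \shift^{|\alpha|} L'\,.
\end{tikzcd}
\end{equation*}
In other words, $\chi_{-}(\alpha)$ is a natural transformation from the identity functor on $\dcat{A}$ to $\shift^{|\alpha|}$.

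I would then introduce the full subcategory
\[
\cat{C}\colonequals\{L\in \dcat{A}\mid \kos{M}{\alpha}\fbuilds \kos{L}{\alpha}\}
\]
and show it is thick; since $M\in \cat{C}$ trivially, this gives $\thick{M}\subseteq \cat{C}$, which is the content of the first claim. Closure under shifts is immediate from $\shift\kos{L}{\alpha}\simeq \kos{\shift L}{\alpha}$. For closure under exact triangles, given $L'\to L\to L''\to$, the square displayed above with the components $L',L,L''$ assembles into a morphism of exact triangles between this triangle and its $\shift^{|\alpha|}$-shift; Verdier's $3\times 3$ lemma then yields an exact triangle
\[
\kos{L'}{\alpha}\lra \kos{L}{\alpha}\lra \kos{L''}{\alpha}\lra
\]
so $\kos{L}{\alpha}$ lies in $\thick{\kos{M}{\alpha}}$ as soon as the outer two terms do. Closure under retracts follows because idempotents split in $\dcat{A}$, and naturality of $\chi_{-}(\alpha)$ with respect to the structure morphisms of a direct sum decomposition $L'\simeq L\oplus L''$ yields $\kos{L'}{\alpha}\simeq \kos{L}{\alpha}\oplus \kos{L''}{\alpha}$. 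The \emph{builds} version is proved in exactly the same way with `localizing' replacing `thick'; the additional closure under coproducts follows from $\kos{\bigoplus_i L_i}{\alpha}\simeq \bigoplus_i \kos{L_i}{\alpha}$, which is a further consequence of naturality.

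Finally, suppose $M$ is proxy small with small proxy $K$. Then $K\in\thick{A}$, so $\kos{K}{\alpha}$ remains in $\thick{A}$ and is therefore itself small. Applying the two assertions proved above to $M\fbuilds K$ and $K\builds M$ gives $\kos{M}{\alpha}\fbuilds\kos{K}{\alpha}$ and $\kos{K}{\alpha}\builds\kos{M}{\alpha}$, exhibiting $\kos{K}{\alpha}$ as a small proxy for $\kos{M}{\alpha}$.

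The main obstacle is the familiar non-functoriality of mapping cones in a triangulated category: there is no genuine functor $L\mapsto \kos{L}{\alpha}$ on $\dcat{A}$. The work-around is to phrase the argument in terms of thickness of the test subcategory $\cat{C}$, rather than to try to promote $\kos{-}{\alpha}$ to a functor; the naturality of $\chi_{-}(\alpha)$, which ultimately rests on its graded-centrality, is exactly what makes both the $3\times 3$-lemma step and the retract/coproduct steps go through.
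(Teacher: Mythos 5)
Your reduction to naturality of $\chi_{-}(\alpha)$ handles shifts, retracts and (co)products correctly, but the closure-under-exact-triangles step has a genuine gap, and it is exactly the gap the paper's proof is designed to avoid. Given an exact triangle $L'\to L\to L''\to$, naturality does give a morphism to the $\shift^{|\alpha|}$-shifted triangle in which all three squares commute; however, a morphism of exact triangles with all three components prescribed need not induce an exact triangle on the mapping cones. What Verdier's $3\times 3$ lemma actually provides is that the initial commutative square (with vertical maps $\chi_{L'}(\alpha)$ and $\chi_{L}(\alpha)$) can be completed by \emph{some} map $u\colon L''\to\shift^{|\alpha|}L''$ so that the cones fit into an exact triangle $\kos{L'}{\alpha}\to\kos{L}{\alpha}\to\cone(u)\to$; nothing forces $u=\chi_{L''}(\alpha)$, nor even $\cone(u)\simeq\kos{L''}{\alpha}$. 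So membership of $L'$ and $L''$ in your test category $\cat{C}$ does not yield membership of $L$, and the claim $\thick M\subseteq\cat C$ does not follow. Centrality alone is not known to suffice here: the paper explicitly remarks, immediately before the lemma, that it does not know whether the statement holds for an arbitrary central element of $\dcat A$ --- which is precisely the generality in which your argument operates.

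The paper's proof instead uses the one extra piece of structure that Hochschild classes carry: the identification $\kos{M}{\alpha}\simeq M\lotimes_A\kos{A}{\alpha}$, which upgrades $\kos{-}{\alpha}$ to an honest exact functor on $\dcat A$ preserving coproducts and small objects; the lemma is then immediate from \ref{ch:proxy-tensor-closed}. If you insert this identification, your thick-subcategory bookkeeping becomes unnecessary and the argument collapses to the paper's; without it, the triangle step does not close.
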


\begin{proof}
The key point is that the action of $\alpha$ on $\dcat A$ is induced by a tensor product:
\[
\kos M{\alpha} = M\lotimes_A {\kos A{\alpha}}\,.
\]
Hence it commutes with exact triangles, retracts, and (possibly infinite) direct sums. It also preserves small objects. Then \ref{ch:proxy-tensor-closed} implies the desired result.
\end{proof}

\begin{chunk}
\label{ch:torsion}
Given an ideal $\fa$ of $\HC RA$, an $\HC RA$-module is \emph{$\fa$-power torsion} if each of its elements is annihilated by a power of $\fa$. When the ideal $\fa$ can be generated by finitely many elements, say $a_1,\dots,a_n$, and a module is $\fa$-power torsion if and only if it is $(a_i)$-power torsion for each $i$.

\begin{lemma}
\label{le:torsion}
Let $N$ be a dg $A$-module and $\fa\subseteq \HC RA$ an ideal. If a dg $A$-module $W$ is such that the $\HC RA$-module $\Ext AWN$ is $\fa$-power torsion, then so is $\Ext AMN$ for any $M$ finitely built from $W$.
\end{lemma}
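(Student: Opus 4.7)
The plan is to show that the full subcategory
\[
\cat{T} \colonequals \{M \in \dcat{A} \mid \Ext{A}{M}{N} \text{ is $\fa$-power torsion}\}
\]
is a thick subcategory of $\dcat{A}$. Since $W \in \cat{T}$ by hypothesis, this will give $\thick(W) \subseteq \cat{T}$, which is precisely the claim.

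To establish that $\cat{T}$ is thick, I would first recall that the $\HC RA$-action on $\Ext{A}{M}{N}$ is, up to sign, given either by post-composition with $\chi_N(\alpha)$ or by pre-composition with $\chi_M(\alpha)$, as recorded in \ref{ch:hh-action}. Using the post-composition description, the functoriality of the long exact Ext sequence in the first variable makes all connecting maps $\HC RA$-linear. Closure under shifts is immediate, and closure under retracts follows because a retract of an $\fa$-power torsion module is again $\fa$-power torsion (submodules of torsion modules are torsion).

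The only non-trivial point is closure under exact triangles. Given a triangle $M_1 \to M_2 \to M_3 \to \shift M_1$ with two of the $M_i$ in $\cat{T}$, one has a long exact sequence of $\HC RA$-modules relating the $\Ext{A}{M_i}{N}$. The class of $\fa$-power torsion $\HC RA$-modules is closed under submodules, quotients and extensions: for a short exact sequence $0 \to L' \to L \to L'' \to 0$ with $L'$ and $L''$ torsion, and $x \in L$, some $\fa^n$ kills the image of $x$ in $L''$, so $\fa^n x \subseteq L'$ is killed by a further power of $\fa$, hence $x$ is torsion. Splicing the long exact sequence into short exact sequences, any one of the three Ext modules is an extension of a subquotient of the other two, and therefore torsion.

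This completes the verification that $\cat{T}$ is thick. The main technical point to be careful about is the $\HC RA$-linearity of the maps appearing in the long exact Ext sequence; this is where the fact that $\chi_{(-)}$ lands in the graded center (and the naturality of the diagram defining $\chi_M(\alpha)$) is used. Everything else is formal category theory together with standard facts about torsion modules.
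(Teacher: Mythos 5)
Your proof is correct and is essentially identical to the paper's: the paper's entire proof consists of the assertion that the subcategory of objects $L$ with $\Ext ALN$ $\fa$-power torsion is thick, and you have simply supplied the verification (linearity of the long exact sequence via the graded-central action, plus the Serre-class properties of torsion modules). The only point worth noting is that your extension-closure step, ``$\fa^n x \subseteq L'$ is killed by a further power of $\fa$,'' tacitly uses that $\fa^n x$ is a finitely generated submodule, i.e.\ that $\fa$ is finitely generated --- harmless here, since the paper only ever applies the lemma to principal or finitely generated ideals, and the paper's own terse proof glosses over the same point.
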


\begin{proof}
The subcategory of $\dcat A$ with objects $L$ for which the $\HC RS$-module $\Ext ALN$ is $\fa$-power torsion is thick. This implies the desired result.
\end{proof}
\end{chunk}

Next we record a computation of Hochschild cohomology that will be often used; for example, see Lemma~\ref{le:lift} and especially its proof. 

\begin{lemma}
\label{le:atiyah}
Let $\vf\colon R\to S$ be a surjective homomorphism of commutative rings with kernel $I$. For each $S$-module $M$ there is an isomorphism of $S$-modules
\[
\delta^\vf(M) \colon \Hom_{S}(I/I^2,M) \xra{\ \cong\ } \HCM[2]RSM\,,
\]
functorial in $M$. Moreover, given surjective homomorphisms of rings $R\xra{\wt\vf}{\wt S}\xra{\dot\vf} S$ with $\dot\vf\wt\vf =\vf$, for $\wt I=\Ker(\wt \vf)$ the following diagram is commutative
\begin{equation}
\label{eq:atiyah2}
\begin{tikzcd}
\Hom_{\wt S}({\wt I}/{\wt I}^2,M) \ar[d,swap,"\delta^{\wt\vf}(M)"] \ar[r,leftarrow] & \Hom_{S}(I/I^2,M) \ar[d,"\delta^{\vf}(M)"] \\
\HCM[2]R{\wt S}M \ar[r,leftarrow] & \HCM[2]RSM
\end{tikzcd}
\end{equation}
Here the $S$-module $M$ is viewed as an $\wt S$-module by restriction of scalars along $\dot\vf$.
\end{lemma}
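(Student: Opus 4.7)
My plan is to construct $\delta^\vf(M)$ via the canonical Atiyah class of $\vf$ and verify bijectivity by a direct cocycle computation, deducing the compatibility square by functoriality.

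To construct $\delta^\vf(M)$: the derived enveloping algebra $\env RS \simeq S\lotimes_R S$ has $\operatorname{H}_0 = S$ and $\operatorname{H}_1 = \Tor[1]RSS = I/I^2$. The Postnikov truncation of $\env RS$, viewed as a dg module over itself, fits into an exact triangle
\[
\shift(I/I^2) \lra \tau_{\leq 1}\env RS \lra S \xra{\,\xi_\vf\,} \shift^2(I/I^2)
\]
in $\dcat{\env RS}$; the connecting morphism $\xi_\vf \in \HCM[2]RS{I/I^2}$ is the canonical Atiyah class of $\vf$. Equivalently, $\xi_\vf$ is the class obtained from the universal square-zero extension $0\to I/I^2\to R/I^2\to S\to 0$ via the natural identification of square-zero $R$-algebra extensions with classes in degree-two Hochschild cohomology. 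Define $\delta^\vf(M)(f) := f_*\xi_\vf$ for $f\in \Hom_S(I/I^2, M)$; naturality in $M$ is immediate.

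To see that $\delta^\vf(M)$ is a bijection, I compute $\HCM[2]RSM$ directly via a Hochschild cochain complex. Take a semi-free dg $R$-algebra resolution $A\to S$ with $A_0 = R$ and $A_1 = R^{(J)}$, $d(e_\alpha) = x_\alpha$ for a generating set $\{x_\alpha\}$ of $I$ (extended in higher degrees to kill all higher $\Tor$). A cocycle computation in the bicomplex $\Hom_R(A^{\otimes *}, M)$ identifies $H^2$ with $\Hom_S(I/I^2, M)$, naturally in $M$, and a routine check shows this agrees with $\delta^\vf(M)$. The crucial observation is that $\vf$ surjective forces $\Omega^1_{S/R} = 0$, so derivations and skew-symmetric biderivations $S \to M$, respectively $S\times S \to M$, all vanish; this eliminates any `exterior' contribution to degree two, leaving only the conormal Atiyah-class part. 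Verifying that higher-degree generators of $A$ do not contribute to $H^2$ is the main obstacle.

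For the compatibility square, choose the resolutions compatibly: pick a resolution $\wt A\to \wt S$ over $R$, then extend to $A\to S$ by adjoining generators mapping to a generating set of $\Ker\dot\vf$. This induces a dg algebra map $\wt A\otimes_R \wt A \to A\otimes_R A$, along which restriction of scalars (combined with precomposition by $\wt S \to S$) realizes the bottom horizontal arrow. The top arrow is precomposition with the natural map $q\colon \wt I/\wt I^2 \to I/I^2$, well-defined since $\wt I\subseteq I$ and hence $\wt I^2\subseteq I^2$. Commutativity of the square then reduces to the identity $\xi_{\wt\vf} = q^* (\text{restriction of } \xi_\vf)$ in $\HCM[2]R{\wt S}{I/I^2}$, which follows from the functoriality of the universal square-zero extension: the quotient $R/\wt I^2 \twoheadrightarrow R/I^2$ lifts the map $\wt S \to S$ and induces $q$ on the conormal modules.
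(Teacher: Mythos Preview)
Your construction of $\delta^\vf(M)$ via the Atiyah class $\xi_\vf$ is essentially the same as the paper's, just phrased differently: the paper uses the exact triangle $J\to \env RS \xra{\mu} S$ in $\dcat{\env RS}$, where $J$ is the fiber of the multiplication map, and the connecting morphism $S\to \shift J$ composed with the truncation $J\to \shift(I/I^2)$ is exactly your $\xi_\vf$. Where the two arguments diverge is in the proof of bijectivity. You propose a direct cocycle computation in the Hochschild bicomplex of a semifree resolution $A$, and you correctly identify the sticking point: controlling the contribution of the higher-degree generators of $A$ to $H^2$. The paper sidesteps this entirely. Since $\Ext[i]{\env RS}{\env RS}{M}=0$ for $i\ge 1$, the long exact sequence of the triangle gives $\HCM[2]RSM\cong \Ext[1]{\env RS}{J}{M}$; and because $\vf$ is surjective one has $\HH{i}{J}=0$ for $i\le 0$ and $\HH{1}{J}=I/I^2$, so truncation immediately yields $\Ext[1]{\env RS}{J}{M}\cong \Hom_S(I/I^2,M)$. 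This two-line argument replaces your cocycle computation and makes the ``main obstacle'' you flag disappear. Your compatibility argument via compatible resolutions and the map of universal square-zero extensions $R/\wt I^2\to R/I^2$ is fine; note only that the identity you need is $q_*\xi_{\wt\vf}=\text{(restriction of }\xi_\vf)$ in $\HCM[2]R{\wt S}{I/I^2}$, a pushforward along $q$ rather than the pullback $q^*$ you wrote.
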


\begin{proof}
The map $\delta^\vf(M)$ is part of a family induced by the universal Atiyah class of $\vf$ and involves the cotangent complex; see \cite[Section~5]{Briggs/Iyengar:2020}. We only need $\delta^\vf(M)$ which can be defined quite simply: The multiplication map $\mu\colon \env RS\to S$ embeds in an exact triangle 
\[
J \lra \env RS \xra{\ \mu\ } S \lra 
\]
in $\dcat{\env RS}$. For any $S$-module $M$ viewed as an $\env RS$-module via $\mu$, the exact triangle above induces isomorphisms
\[
\Ext[i]{\env RS}JM \cong \Ext[i+1]{\env RS}SM \quad \text{for $i\ge 1$.}
\]
We claim that $\Ext[1]{\env RS}JM\cong \Hom_S(I/I^2,M)$; with this identification $\delta^{\vf}(M)$ is the isomorphism above for $i=1$. The stated functoriality is easily verified.

As to the claim, as $\vf$ is surjective the natural map $\HH 0{\env RS}=S\otimes_RS \to S$ is an isomorphism, so from the exact triangle above we obtain
\[
\HH iJ =
\begin{cases}
0 & \text{for $i\le 0$}\\
\Tor[i]RSS & \text{for $i\ge 1$}\,.
\end{cases}
\]
It is a standard computation that $\Tor[1]RSS\cong I/I^2$ therefore truncation yields the first isomorphism below
\[
\Ext[1]{\env RS}JM \cong \Ext[0]{\env RS}{I/I^2}M \cong \Hom_S(I/I^2,M)\,.
 \]
The second one holds as the action of $\env RS$ on $I/I^2$ and $M$ factors through $S$.
\end{proof}

\section{Surjective maps}

In this section we prove Theorem~\ref{ithm:local} from the introduction. Throughout $R$ will be a commutative noetherian ring. 

\begin{chunk}
\label{ch:lci-surjective}
A surjective homomorphism $\vf\colon R\to S$ is \emph{complete intersection} if $\Ker(\vf)$ can be generated by a regular sequence; it is \emph{locally} complete intersection if  for each prime $\fq\in \spec S$, the map $\vf_\fq$ is complete intersection. There is no distinction between the conditions when $R$ is local. It was proved in \cite[Theorem~9.1]{Dwyer/Greenlees/Iyengar:2006b} that proxy smallness ascends and descends, in the sense of \ref{ch:ascent-defn}, along complete intersection maps. We prove the converse as part of the result below:
\end{chunk}

\begin{theorem}
\label{th:ci-local}
Let $\vf\colon R \to S$ be a surjective homomorphism of commutative noetherian rings.
The following conditions on $\vf$ are equivalent:
\begin{enumerate}[\rm(1)]
\item \label{th:ci-local1} $\vf$ is locally complete intersection;
\item \label{th:ci-local2} $S\fbuilds[\env RS] \env RS$;
\item \label{th:ci-local3} $S$ is proxy small in $\dcat{\env RS}$ and $\Tor[i] RSS = 0$ for all $i \gg 0$;
\item \label{th:ci-local4} $\pdim_RS$ is finite and proxy smallness ascends along $\vf$;
\item \label{th:ci-local5} $\pdim_RS$ is finite and proxy smallness of objects in $\dcatfl S$ ascends along~$\vf$.
\end{enumerate}
\end{theorem}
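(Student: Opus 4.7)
The plan is to prove the equivalences along the chain $(2)\Leftrightarrow(3)$, $(1)\Rightarrow(2)$, and $(1)\Rightarrow(4)\Rightarrow(5)\Rightarrow(1)$, with the last implication carrying the substance. The key structural remark throughout is that since $\vf$ is surjective, $\HH{0}{\env{R}{S}}=S\otimes_R S=S$; hence Lemma~\ref{le:h0} is directly available with $A=\env{R}{S}$ and with $S$ itself playing the role of $\HH{0}{A}$.

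Using this remark, $(2)\Rightarrow(3)$ is immediate: if $S\fbuilds[\env{R}{S}]\env{R}{S}$ then $\env{R}{S}$ serves as a small proxy for $S$, while the thick subcategory generated by the bounded complex $S$ lies in bounded complexes, so $\Tor[i]{R}{S}{S}=\HH{i}{\env{R}{S}}=0$ for $i\gg 0$. The reverse $(3)\Rightarrow(2)$ is exactly Lemma~\ref{le:h0}\ref{le:h0part2} applied to $A=\env{R}{S}$. For $(1)\Rightarrow(2)$ I would use Proposition~\ref{pr:local-to-global} to reduce to the local case, where $\Ker\vf=(t_1,\dots,t_n)$ is an $R$-regular sequence; then the Koszul resolution identifies $\env{R}{S}$ with the exterior dg algebra $\Lambda_S(e_1,\dots,e_n)$ on degree-one generators with zero differential, whose augmentation ideal $J$ is nilpotent and whose $J$-adic filtration has free $S$-module quotients, exhibiting $\env{R}{S}$ as finitely built from $S$. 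For $(1)\Rightarrow(4)$ I would combine the standard finiteness of $\pdim_R S$ for locally complete intersection surjections with ascent of proxy smallness along complete intersection maps from~\cite[Theorem~9.1]{Dwyer/Greenlees/Iyengar:2006b}. The implication $(4)\Rightarrow(5)$ is immediate because $\dcatfl S\subseteq\dcat S$.

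The substantive implication is $(5)\Rightarrow(1)$. Here I would first invoke Proposition~\ref{pr:local-to-global} to reduce to the local setting $\vf\colon(R,\fm,k)\to(S,\fn,k)$; the residue fields match because $\vf$ is local and surjective. The residue field $k$ lies in $\dcatfl S$, and as an $R$-complex it is proxy small in $\dcat R$ (the Koszul complex on generators of $\fm$ is a small proxy, by Corollary~\ref{co:proxy-support}). Hypothesis (5) then yields that $k$ is proxy small in $\dcat S$, and Pollitz's theorem~\cite[Theorem~5.2]{Pollitz:2019} forces $S$ to be a complete intersection ring. The remaining task is to promote ``$S$ is CI and $\pdim_R S<\infty$'' to ``$\vf$ is CI'', namely that $I=\Ker\vf$ is generated by an $R$-regular sequence. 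For this I would lift a minimal generating set of $I$, via the isomorphism in Lemma~\ref{le:atiyah}, to classes $\alpha_1,\dots,\alpha_n\in\HC[2]{R}{S}$, consider the Koszul object $\kos{k}{\alpha_1,\dots,\alpha_n}$ in $\dcat S$ obtained from the central operators $\chi_k(\alpha_i)$, and combine Lemma~\ref{le:proxy-central} and Lemma~\ref{le:torsion} with the Tor-vanishing $\Tor[i]{R}{S}{S}=0$ for $i\gg 0$ (a consequence of $\pdim_R S<\infty$) to force $n=\grade I=\pdim_R S$, which makes $I$ generated by an $R$-regular sequence.

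The main obstacle is this final step, which must bridge ring-level information ``$S$ is CI'' into map-level information ``$\vf$ is CI''. The bridge is supplied by the action of Hochschild cohomology on $\dcat S$ developed in Section~2: it encodes the map $\vf$ itself, via the algebra $\HC{R}{S}$ acting by central operators on $\dcat S$, and without this action one has access only to ring-theoretic properties of $S$ and cannot probe the regularity of the generators of $\Ker\vf$.
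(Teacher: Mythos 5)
There is a genuine gap, and it sits exactly where you locate the substance of the theorem, in \ref{th:ci-local5}$\Rightarrow$\ref{th:ci-local1}. Your plan is to apply hypothesis \ref{th:ci-local5} to the single object $k$ and then invoke \cite[Theorem~5.2]{Pollitz:2019} to conclude that $S$ is complete intersection. But the residue field is proxy small over \emph{every} local ring: the Koszul complex $K$ on a generating set of $\fn$ has finite length homology, hence is finitely built from $k$, and $K$ builds $k$ by \ref{ch:Neeman} since the supports agree. So the conclusion ``$k$ is proxy small in $\dcat S$'' carries no information, and Pollitz's theorem (which requires \emph{all} of $\dcatfl S$, or at least carefully chosen test objects, to be proxy small) cannot be triggered this way. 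The whole difficulty of this implication is to manufacture objects of $\dcatfl S$ whose proxy smallness over $S$ is \emph{not} automatic but whose proxy smallness over $R$ \emph{is} provable. The paper does this by factoring $\vf$ as $R\xra{\wt\vf}\wt S=R/(\bsx)\xra{\dot\vf}S$ with $\bsx$ a maximal regular sequence in $\Ker(\vf)\setminus\fm\Ker(\vf)$, lifting Hochschild classes $s\in\HC[2]R{\wt S}$ to classes $t\in\Ext[2]Skk$ via Lemmas~\ref{le:lift} and \ref{le:key}, and feeding the cones $\kos kt$ into hypothesis \ref{th:ci-local5}; proxy smallness of $\kos kt$ over $R$ comes from Lemma~\ref{le:proxy-central} applied over $\wt S$ together with ascent along the complete intersection $\wt\vf$, and the payoff is the torsion condition needed for Lemma~\ref{le:Yo} to show $S$ is small over $\wt S$, forcing $\wt S=S$. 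Your closing paragraph correctly identifies that the Hochschild action is the bridge from ring-level to map-level information, but the sketch you give for that bridge (lifting generators of $I$ to $\HC[2]RS$ and ``forcing $n=\grade I$'') presupposes the isomorphism \eqref{eq:hh-ci}, which is only available once $\vf$ is already known to be complete intersection, and in any case cannot repair the vacuous first step.

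A second, more mechanical, gap: your chain of implications is $(2)\Leftrightarrow(3)$, $(1)\Rightarrow(2)$, and $(1)\Rightarrow(4)\Rightarrow(5)\Rightarrow(1)$. This proves $(1)\Leftrightarrow(4)\Leftrightarrow(5)$ and that $(1)$ implies $(2)$ and $(3)$, but nothing returns from $(2)$ or $(3)$ to the cycle, so those two conditions are not shown to be equivalent to the rest. The fix is the paper's implication $(2)\Rightarrow(4)$: $S\fbuilds[\env RS]\env RS$ gives ascent of proxy smallness by \cite[Theorem~8.3]{Dwyer/Greenlees/Iyengar:2006b}, and applying $-\lotimes_SM$ shows $\Tor RSM$ is bounded for all $M$, whence $\pdim_RS<\infty$. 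Your arguments for $(2)\Leftrightarrow(3)$ and $(1)\Rightarrow(2)$ (the exterior-algebra filtration argument in the local case) are fine.
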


The condition that $\pdim_RS$ is finite is equivalent to $S$ being small in $\dcat R$, so condition \ref{th:ci-local4} involves only the structure of the appropriate derived categories as abstract triangulated categories. 

The proof of Theorem~\ref{th:ci-local} takes some preparation and is given in \ref{ch:proof-local-ci}. It builds on ideas from \cite{Pollitz:2019} and extends that results therein, as is explained in \ref{ch:ci-rings}.

\begin{chunk}
\label{ch:delta}
Let $\vf\colon (R,\fm,k)\to S$ be a surjective map of local rings, and let $\epsilon\colon S\to k$ be the canonical surjection. It induces a morphism of dg $S$-algebras $\env RS \to S\lotimes_Rk$. Recall the  standard diagonal isomorphism
\[
(M \otimes_R N) \otimes_{S \otimes_R S} S \xra{\cong} M \otimes_S N\,.
\]
A derived version of this isomorphism yields quasi-isomorphisms of dg algebras
\[
(S\lotimes_Rk)\lotimes_{\env RS}S \simeq S\lotimes_Sk \simeq k\,.
\]
This map and adjunction yield the isomorphism in the definition of the following homomorphism of $S$-modules
\[
\psi^S\colon \HCM RSk \cong \Ext[]{S\lotimes_Rk}kk \lra \Ext[]Skk\,.
\]
The map heading right is induced by restriction along the morphism of dg algebras $S\to S\lotimes_Rk$, and its compatibility with the augmentations to $k$.

It is not hard to verify that the composition of the maps
\[
\HC RS\xra{\ \HCM RS{\epsilon}\ } \HCM RSk \xra{\ \psi^S } \Ext[]Skk
\]
is nothing but the characteristic map $\chi_k$ described in~\ref{ch:hh-action}. 
\end{chunk}

The next results concern the following scenario.

\begin{chunk}
\label{ch:factorization}
Let $\vf\colon R\to S$ be a surjective local homomorphism admitting a factorization 
\[
(R,\fm,k) \xra{\ \wt\vf\ } \wt S\xra{\ \dot{\vf}\  }S
\]
such that for $\wt I\colonequals\Ker \wt \vf $ and $I\colonequals \Ker \vf$  the induced map $\wt I/\fm \wt I \to I/\fm I$ is injective.

For any element $s$ in $\HC[2]R{\wt S}$, we write $\kos ks$ for the mapping cone of the element $\chi^{\wt S}_k(s)$ in $\Ext[2]{\wt S}kk$. Restriction induces a functor
\[
{\dot\vf}_*\colon \dcat S\to \dcat{\wt S}\,,
\]
of triangulated categories. 

\end{chunk}
 
\begin{lemma}
\label{le:lift}
With notation and hypotheses as in \ref{ch:factorization} the induced maps
\[
\Ext[2]{S}kk \xra{\ \Ext[2]{\dot\vf}kk\ } \Ext[2]{\wt S}kk \xleftarrow{\ \chi^{\wt S}_k\ } \HC[2]R{\wt S} 
\]
one has an inclusion $ \Image(\Ext[2]{\dot\vf}kk)\supseteq \Image(\chi^{\wt S}_k)$.
\end{lemma}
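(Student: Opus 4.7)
The strategy is to factor $\chi^{\wt S}_k$ through $\HCM[2]R{\wt S}k$ using the identity $\chi^{\wt S}_k = \psi^{\wt S}\circ\HCM[2]R{\wt S}{\epsilon}$ established in \ref{ch:delta}, and then to transport any element of $\HCM[2]R{\wt S}k$ to $\HCM[2]RSk$. Concretely, the plan is to establish the commutative square
\[
\begin{tikzcd}
\HCM[2]RSk \ar[r, "\psi^S"] \ar[d,"{\rm rest}"'] & \Ext[2]{S}kk \ar[d, "\Ext[2]{\dot\vf}kk"] \\
\HCM[2]R{\wt S}k \ar[r, "\psi^{\wt S}"'] & \Ext[2]{\wt S}kk
\end{tikzcd}
\]
in which the left vertical arrow is surjective; granted this, for $y\in\HC[2]R{\wt S}$ one sets $\bar y\colonequals\HCM[2]R{\wt S}{\epsilon}(y)$, chooses a lift $z\in\HCM[2]RSk$, and reads off
\[
\chi^{\wt S}_k(y) \;=\; \psi^{\wt S}(\bar y) \;=\; \Ext[2]{\dot\vf}kk(\psi^S(z)) \;\in\; \Image(\Ext[2]{\dot\vf}kk)\,.
\]

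The surjectivity of the left column is where the injectivity hypothesis $\wt I/\fm\wt I\hookrightarrow I/\fm I$ enters. Since $I\subseteq\fm$ (and likewise for $\wt I$), one has $\Hom_S(I/I^2,k)=\Hom_k(I/\fm I,k)$ and $\Hom_{\wt S}(\wt I/\wt I^2,k)=\Hom_k(\wt I/\fm\wt I,k)$, so the dual of the injective map $\wt I/\fm\wt I\to I/\fm I$ is the surjection $\Hom_k(I/\fm I,k)\thra\Hom_k(\wt I/\fm\wt I,k)$. Applying Lemma~\ref{le:atiyah} with $M=k$ and invoking the commutative diagram \eqref{eq:atiyah2}, the isomorphisms $\delta^\vf(k)$ and $\delta^{\wt\vf}(k)$ identify this dualized map with the restriction map on Hochschild cohomology, proving surjectivity.

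It remains to verify commutativity of the square, and this is the only real technical point. The right-hand square of
\[
\begin{tikzcd}[column sep=small]
\Ext[]{\env RS}Sk \ar[r,"\cong"] \ar[d] & \Ext[]{S\lotimes_R k}kk \ar[r] \ar[d] & \Ext[]{S}kk \ar[d] \\
\Ext[]{\env R{\wt S}}{\wt S}k \ar[r,"\cong"'] & \Ext[]{\wt S\lotimes_R k}kk \ar[r] & \Ext[]{\wt S}kk
\end{tikzcd}
\]
commutes by functoriality of restriction along the commutative square of dg algebras formed by $\wt S\to S$ and $\wt S\lotimes_R k\to S\lotimes_R k$; the left-hand square commutes by naturality of the adjunction isomorphism $\Ext[]{\env R?}?k\cong\Ext[]{?\lotimes_R k}kk$ together with the naturality (in $?$) of the diagonal quasi-isomorphism $(?\lotimes_Rk)\lotimes_{\env R?}?\simeq k$ used in \ref{ch:delta}. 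The main obstacle is the bookkeeping required to make these naturality statements precise; the clean way is to fix homotopically flat resolutions of $\wt S$ and $S$ over $R$ compatibly with $\dot\vf$, so that the passage to $\env R?$ and the base change by $k$ become strictly functorial on the nose.
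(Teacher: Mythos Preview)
Your proof is correct and follows essentially the same route as the paper: both arguments hinge on the commutative square relating $\psi^S$, $\psi^{\wt S}$, and $\Ext[2]{\dot\vf}kk$, together with surjectivity of the restriction map $\HCM[2]RSk\to\HCM[2]R{\wt S}k$, obtained by dualizing the hypothesis $\wt I/\fm\wt I\hookrightarrow I/\fm I$ and transporting via the isomorphisms $\delta^{\vf}(k)$, $\delta^{\wt\vf}(k)$ of Lemma~\ref{le:atiyah}. The paper packages everything into a single large diagram (including the $\Hom(I/I^2,k)$ row and the $\HC[2]R{\wt S}$ column), whereas you separate the factorization $\chi^{\wt S}_k=\psi^{\wt S}\circ\HCM[2]R{\wt S}{\epsilon}$ from the square, but the content is the same.
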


\begin{proof}
The essence of the proof  is a commutative diagram of $k$-vector spaces
\[
\begin{tikzcd}
\Hom_S(I/I^2,k) \ar[d, "\cong", "\delta^{\vf}(k)" swap]\ar[twoheadrightarrow,r] 
	&\Hom_{\wt S}({\wt I}/{\wt I}^2,k) \ar[d, "\cong","\delta^{\wt\vf}(k)" swap] \ar[leftarrow,r]
		&\Hom_{\wt S}({\wt I}/{\wt I}^2,\wt S) \ar[d, "\cong", "\delta^{\wt\vf}(\wt S)" swap]\\
\HCM[2]RSk \ar[d,swap,"\psi^{S}"] \ar[twoheadrightarrow,r]& \HCM[2]R{\wt S}k \ar[d,swap,"\psi^{\wt S}"] & \HC[2]R{\wt S}\ar{l} \ar[dl,"\chi^{\wt S}_k"] \\
\Ext[2]Skk\ar[r,swap,"{\Ext[2]{\dot\vf}kk}"] & \Ext[2]{\wt S}kk & 
\end{tikzcd}
\]
with surjective maps and isomorphisms as indicated. The $\delta$ maps are from Lemma \ref{le:atiyah}. Given this the desired inclusion can be verified by chasing around the diagram. 

As to the commutativity of the diagram: The squares in the top row are commutative by the functoriality of $\delta^{\wt\vf}(-)$ with respect to the ring argument and the module argument. The vertical maps $\psi^{S}$ and $\psi^{\wt S}$ are from \ref{ch:delta}, and the commutativity of that square is by functoriality of the construction, which is readily verified. The commutativity of the triangle has been commented on already in \ref{ch:delta}.

It remains to verify the surjectivity of the map in the top left square: Since the map of $k$-vector spaces ${\wt I}/{\fm \wt I} \to I/{\fm I}$ is injective, it is split-injective. Therefore applying $\Hom_k(-,k)$ yields surjectivity of the map below
\[
\Hom_k(I/\fm I,k) \twoheadrightarrow \Hom_k(\wt I/\fm\wt I,k)\,.
\]
This identifies with the surjection in the top left square via the adjunction isomorphism $\Hom_S(N,k) \cong \Hom_k(N/\fm N,k)$. This completes the proof of the claims about the commutative diagram above, and hence that of the result.
\end{proof}

The result below is a crucial input in the proof that \ref{th:ci-local5}$\Rightarrow$\ref{th:ci-local1} in Theorem~\ref{th:ci-local}.

\begin{lemma}
\label{le:key}
With hypotheses as in \ref{ch:factorization}, given an element $s$ in $\HC[2]R{\wt S}$ there exists an element $t$ in $\Ext[2]Skk$ such that ${\dot\vf}_*(\kos k{t}) \cong \kos ks$ in $\dcat {\wt S}$. Moreover, for any such $t$, the element $s^2$ annihilates $\Ext{\wt S}{{\dot\vf}_*(\kos k{t})}-$.
\end{lemma}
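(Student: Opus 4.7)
The plan splits the lemma into its two claims, and the first is essentially a direct invocation of Lemma~\ref{le:lift}. That result tells me $\chi^{\wt S}_k(s) \in \Image(\Ext[2]{\dot\vf}kk)$, so I can pick any $t \in \Ext[2]Skk$ with $\Ext[2]{\dot\vf}kk(t) = \chi^{\wt S}_k(s)$. Since restriction $\dot\vf_*\colon \dcat S \to \dcat{\wt S}$ is triangulated, it sends the cone triangle defining $\kos k t$ in $\dcat S$ to the cone triangle on $\dot\vf_*(t) = \chi^{\wt S}_k(s)$ in $\dcat{\wt S}$, and the latter cone is by definition $\kos k s$.

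For the second claim, fix $N$ in $\dcat{\wt S}$ and set $L \colonequals \kos k s$. I would apply the contravariant functor $\Hom_{\dcat{\wt S}}(-, \shift^j N)$ to the triangle
\[
k \xra{\,\chi^{\wt S}_k(s)\,} \shift^2 k \xra{\,\iota\,} L \xra{\,\pi\,} \shift k\,,
\]
to obtain a long exact sequence
\[
\cdots \to \Ext[j-1]{\wt S}{k}{N} \xra{\,\pi^*\,} \Ext[j]{\wt S}{L}{N} \xra{\,\iota^*\,} \Ext[j-2]{\wt S}{k}{N} \xra{\,\pm s\,} \Ext[j]{\wt S}{k}{N} \to \cdots
\]
The crucial input is the centrality of the Hochschild action recorded in \ref{ch:hh-action}: it identifies the connecting map (precomposition with $\chi^{\wt S}_k(s)$) with multiplication by $s$ up to sign, and simultaneously guarantees that $\iota^*$ and $\pi^*$ are $s$-linear.

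Given $\zeta \in \Ext[j]{\wt S}{L}{N}$, the chase then runs in two steps. Exactness forces $s\cdot\iota^*(\zeta) = 0$, hence $\iota^*(s\zeta) = s\iota^*(\zeta) = 0$, so $s\zeta = \pi^*(y)$ for some $y \in \Ext[j+1]{\wt S}{k}{N}$; $s$-linearity of $\pi^*$ then yields $s^2\zeta = \pi^*(sy)$; and finally $sy$ lies in the image of $s$-multiplication $\Ext[j+1]{\wt S}{k}{N} \to \Ext[j+3]{\wt S}{k}{N}$, which is exactly $\ker(\pi^*)$ at the next position in the sequence, so $s^2\zeta = 0$.

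The only real obstacle I anticipate is bookkeeping the Hochschild sign conventions and confirming rigorously that the connecting maps in the long exact sequence coincide, up to a uniform sign, with the Hochschild action of $s$ — a sign that is harmless for the annihilation statement. Beyond this, no deeper tools are needed: no appeal to the cotangent complex or the detailed structure of $\env R{\wt S}$ is required beyond what already went into Lemma~\ref{le:lift}.
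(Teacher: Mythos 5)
Your proposal is correct and follows the same route as the paper: the first claim is exactly the paper's argument (choose $t$ via Lemma~\ref{le:lift} and use exactness of ${\dot\vf}_*$), and the second claim, which the paper dismisses as ``clear,'' is the standard long-exact-sequence chase you carry out, with the centrality relation from \ref{ch:hh-action} correctly supplying the $s$-linearity of $\iota^*$ and $\pi^*$ and the identification of the connecting map with $\pm s$.
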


\begin{proof}
By Lemma~\ref{le:lift}, there exists an element $t$ in $\Ext[2]Skk$ whose image under $\Ext[2]{\dot\vf}kk$ equals $\chi^{\wt S}_k(s)$. This means that in $\dcat{\wt S}$ there is a commutative diagram
\[
\begin{tikzcd}
k \ar[d,swap,"\cong"] \ar[r,"\chi^{\wt S}_k(s)"] & \shift^2k \ar[d,"\cong"] \\
k\ar[r,"{{\dot\vf}_*(t)}"] & \shift^2k
\end{tikzcd}
\]
As ${\dot\vf}_*$ is exact, the first part of the statement follows. The second part is clear.
\end{proof}

In the proof of Theorem \ref{th:ci-local} we also need a criterion for detecting small complexes through the action of Hochschild cohomology. 

\begin{chunk}
\label{ch:variety}
Let $\vf\colon (R,\fm,k)\to S$ be a  surjective local complete intersection map with kernel  $I$. Set $\mcN\colonequals \Hom_S(I/I^2,S)$, this is the normal module of $\vf$. The Hochschild cohomology algebra $\HC RS$ is graded-commutative so the map $\delta^\vf(S) \colon \mcN\to \HC[2] RS$ described in Lemma~\ref{le:atiyah} induces a homomorphism of $S$-algebras 
\[
\Sym_S(\mcN) \lra \HC RS\,.
\]
Since $\vf$ is complete intersection the $S$-module $\mcN$ is free of rank the codimension of $S$ in $R$ and the map above is bijective; see \cite[Proposition~2.6]{Avramov/Buchweitz:2000a}. In particular, the ring $\HC RS$ is noetherian. From \emph{op.~cit.} one also gets that for any $S$-module $M$ the natural map is an isomorphism:
\begin{equation}
\label{eq:hh-ci}
\HC RS\otimes_S M \xra{\ \cong\ } \HCM RSM\,.
\end{equation}
Later on we will use the fact that this isomorphism is functorial in $M$. 
\end{chunk}

\begin{lemma}
\label{le:Yo}
Let $\vf$ be as in \ref{ch:variety} and $M$ an $S$-complex. If $M$ is small in $\dcat R$ and for some generating set $s_1,\dots,s_c$ for the $S$-module $\HC[2]RS$ the $\HC RS$-module $\Ext SMk$ is $(s_i)$-power torsion for each $i$, then $M$ is small in $S$.
\end{lemma}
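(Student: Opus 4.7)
The plan is to leverage the explicit description of $\HC RS$ under the complete intersection hypothesis, as recalled in \ref{ch:variety}: one has $\HC RS \cong \Sym_S \mcN$ for the free $S$-module $\mcN = \HC[2]RS$ of rank $c$ equal to the codimension of $\vf$. Since $s_1,\ldots,s_c$ generate $\HC[2]RS$ as an $S$-module, they form a basis of $\mcN$, generate $\HC RS$ as an $S$-algebra, and the ideal $(s_1,\ldots,s_c) \subseteq \HC RS$ coincides with the augmentation ideal $\bigoplus_{i\ges 2}\HC[i]RS$. The hypothesis on $\Ext SMk$ therefore amounts to the assertion that every element is annihilated by some power of this augmentation ideal.

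The principal technical input I would bring in is that $\Ext SMk$ is finitely generated as a graded $\HC RS$-module. This is where smallness of $M$ in $\dcat R$ enters, and it is the content of the Gulliksen--Avramov--Buchweitz theory of support varieties over a complete intersection \cite{Avramov/Buchweitz:2000a}; alternatively, it can be deduced from the isomorphism \eqref{eq:hh-ci} via a change-of-rings argument applied to a resolution of $M$. I expect this step, rather than any subsequent manipulation, to be the main obstacle: without finite generation, the torsion hypothesis alone carries no asymptotic information about $\Ext[n]SMk$.

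Granted finite generation, pick homogeneous generators of $\Ext SMk$ in degrees at most some $d$. Applying the torsion hypothesis to each generator and to each $s_i$ and combining the resulting exponents, a single power of the augmentation ideal annihilates every generator, and hence the whole of $\Ext SMk$. Since this ideal lives in degrees $\ges 2$, one deduces $\Ext[n]SMk = 0$ for all $n$ exceeding an explicit bound depending on $d$ and that power. Because $\hh M$ is bounded and finitely generated over $S$ (finite generation inherited from $R$ through the surjection $\vf$), this asymptotic vanishing forces $\pdim_S M$ to be finite by standard local algebra on minimal free resolutions, so that $M$ is small in $\dcat S$.
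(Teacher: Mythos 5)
Your proposal is correct and follows essentially the same route as the paper: smallness of $M$ over $R$ gives finite generation of $\Ext SMk$ over $\HC RS$ (via \cite[Corollary~6.2]{Avramov/Buchweitz:2000a}), the torsion hypothesis together with the fact that $\HC[2]RS$ generates $\HC RS$ as an $S$-algebra then forces $\Ext[i]SMk=0$ for $i\gg 0$, and hence $M$ is small over $S$. You have correctly identified finite generation as the load-bearing step.
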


\begin{proof}
Since $M$ is small in $\dcat R$, the $\HC RS$-module $\Ext SMk$ is finitely generated; see, for example, \cite[Corollary~6.2]{Avramov/Buchweitz:2000a}. Since $\HC[2]RS$  generates $\HC RS$ as an $S$-algebra, the hypothesis implies that  $\Ext SMk$ is  $\HC[\ges 1]RS$-power torsion, and hence $\Ext[i]SMk=0$ for $i\gg 0$. Thus $M$ is small in $S$. 
\end{proof}

\begin{chunk}
\label{ch:proof-local-ci}
\begin{proof}[Proof of Theorem~\ref{th:ci-local}]
\ref{th:ci-local1}$\Rightarrow$\ref{th:ci-local3} 
As $\vf$ is locally complete intersection its flat dimension is finite so the $R$-module $\hh{\env RS}=\Tor RSS$ is finitely generated; in particular $\Tor [i]RSS=0$ for $i\gg 0$. It remains to check that $S$ is proxy small in $\dcat{\env RS}$. When $R$ is a local ring $\Ker(\vf)$ is generated by a regular sequence and then the desired result is contained in the \emph{proof} of \cite[Theorem~9.1]{Dwyer/Greenlees/Iyengar:2006b}. The hypothesis that $\vf$ is complete intersection is local on $\spec S$, meaning that $\vf$ is complete intersection if and only if the map of local rings $\vf_\fq$ is complete intersection for each $\fq$ in $\spec S$. This is by defintion.
We claim that the conclusions that $S$ is proxy small in $\dcat{\env RS}$ is also local on $\spec S$.

Indeed the $R$-module $\Tor RSS$ is finitely generated so Proposition~\ref{pr:local-to-global} applies to the dg $S$-algebra $A\colonequals \env RS$ and $M\colonequals S$ to yield that $S$ is proxy small in $\dcat{\env RS}$ if and only if $S_\fq$ is proxy small in $\dcat{(\env RS)_\fq}$. It remains to observe that $(\env RS)_\fq\cong \env{R_{\fq\cap R}}{(S_\fq)}$.

\ref{th:ci-local2}$\Leftrightarrow$\ref{th:ci-local3} Since $R\to S$ is surjective, $\HH{0}{\env RS}=S$. Thus Lemma \ref{le:h0} part \ref{le:h0part2} yields the desired equivalences.

\ref{th:ci-local2}$\Rightarrow$\ref{th:ci-local4} The assumption that $S\fbuilds[\env RS] \env RS$ implies that proxy smallness ascends along $\vf$; see \cite[Theorem~8.3]{Dwyer/Greenlees/Iyengar:2006b}. It remains to verify that $S$ is small in $\dcat R$. Since $S\fbuilds[\env RS] \env RS$, for any $S$-module $M$ applying $(-)\lotimes_SM$ yields $M\fbuilds[R] (S\lotimes_RM)$ hence $\hh{S\lotimes_RM}$ is bounded whenever $\hh M$ is bounded. Since $\hh{S\lotimes_RM}=\Tor RSM$ it follows that $\fdim_R S<\infty$, and as $\vf$ is surjective we can conclude that $S$ is small over $R$.

\ref{th:ci-local4}$\Rightarrow$\ref{th:ci-local5} This  is a tautology.

\ref{th:ci-local5}$\Rightarrow$\ref{th:ci-local1} The desired conclusion can be checked locally at the maximal ideals of $S$, and the hypothesis is easily seen to descend to localization at any such ideal. Thus we may assume $\vf\colon (R,\fm,k) \to (S,\fn,k)$ is a surjective local homomorphism. 

Choose a maximal regular sequence $\bsx$ in $\Ker(\vf)\setminus \fm\Ker(\vf)$ and set $\wt{S}\colonequals R/(\bsx)$. The map $\vf$ factors as
\[
\begin{tikzcd}
R \ar[r,"{\wt\vf}"] \ar[rr, bend right=20, "{\vf}" swap] & \wt S \ar[r,"{\dot\vf}"] & S\,,
\end{tikzcd}
\]
where $\wt\vf$ is complete intersection and $\Ker(\dot \vf)$ contains only zero-divisors; the latter condition implies that either $\wt S=S$ or $S$ is \emph{not small} in $\dcat {\wt S}$; see \cite[Corollary~1.4.7]{Bruns/Herzog:1998}. We shall prove that under the hypothesis $S$ is small in $\dcat{\wt S}$ yielding that $\vf=\wt\vf$ and hence that $\vf$ is a complete intersection, as desired.

The argument involves a series of reductions. We are now in context of \ref{ch:factorization}, and we keep the notation from there. The desired conclusion is that ${\dot\vf}_*(S)$ is small. Let $K$ be the Koszul complex on a set of generators for the maximal ideal of $S$. By a standard reduction recalled in \cite[Remark~5.6]{Dwyer/Greenlees/Iyengar:2006b} it suffices to verify that the $\wt S$-complex ${\dot\vf}_*(K)$ is small. We do so by checking that the hypotheses of Lemma~\ref{le:Yo} hold for the complete intersection $\wt\vf$ and $M\colonequals {\dot\vf}_*(K)$.

As $\vf_*(S)$ is small in $\dcat R$, by assumption, so is $\vf_*(K)$ for it is finitely built out of $\vf_*(S)$. Next, fix an element $s$ in $\HC[2]R{\wt S}$ and let $t$ be the  element in $\Ext[2]Skk$ provided by Lemma~\ref{le:key}. Thus there is an isomorphism
\[
{\dot{\vf}}_*(\kos k{t}) \cong \kos k{s} \quad \text{in $\dcat{\wt S}$.}
\]
Since $k$ is proxy small in $\dcat{\wt S}$ so is $\kos k{s}$; this uses the fact $s$ comes from $\HC R{\wt S}$; see~Lemma~\ref{le:proxy-central}. Thus ${\dot{\vf}}_*(\kos k{t})$ is proxy small in $\dcat{\wt S}$. Since $R\to \wt S$ is complete intersection, this implies that the $R$-complex
\[
\vf_*(\kos k{t}) \cong \wt{\vf}_*({\dot{\vf}}_*(\kos k{t}))
\]
is proxy small in $\dcat{R}$ by  \cite[Theorem~9.1]{Dwyer/Greenlees/Iyengar:2006b}. 

Evidently the $S$-complex $\kos k{t}$ is in $\dcatfl S$ hence the hypothesis of Theorem~\ref{th:ci-local}\ref{th:ci-local5}  yields that $\kos kt$ is proxy small in $\dcat S$. So $\kos kt$ finitely builds $K$ in $\dcat S$; this is by Corollary~\ref{co:proxy-support}. It follows that ${\dot\vf}_*(\kos k{t})$ finitely builds ${\dot\vf}_*(K)$. We can now apply the second part of Lemma~\ref{le:key} and Lemma \ref{le:torsion} to deduce that the $\HC R{\wt S}$-module $\Ext{\wt S}{{\dot\vf}_*(K)}k$ is $(s)$-power torsion. Then Lemma~\ref{le:Yo}  yields that ${\dot\vf}_*(K)$ is small. 

This wraps up the proof that \ref{th:ci-local5}$\Rightarrow$\ref{th:ci-local1} and thereby that of Theorem~\ref{th:ci-local}.
\end{proof}
\end{chunk}

\begin{chunk}
In Theorem~\ref{th:ci-local}, it seems plausible one can relax the hypothesis in \ref{th:ci-local4} to: $\pdim_RS$ is finite and any $M$ in $\dbcat S$ that is \emph{small} in $\dcat R$ is proxy small in $\dcat S$. This condition already implies $\vf$ is quasi-Gorenstein, that is to say: $\RHom RSR \simeq \shift^c S$ in $\dcat S$, where $c=\dim R-\dim S$; see \cite[Theorem~6.7]{Dwyer/Greenlees/Iyengar:2006b}. 
\end{chunk}

A shortcoming in the statement of Theorem~\ref{th:ci-local}\ref{th:ci-local5} is that it involves all of $\dcatfl S$. When $R$ is local a careful reading of the proof shows that one needs to check the hypothesis on finitely many complexes. In fact, these complexes can specified in advance. This is clarified in the discussion below.

\begin{chunk}
\label{ch:pi-test}
Let $(R,\fm,k)$ be a local ring and let $\pi(R)$ denote its homotopy Lie algebra; see \cite[Chapter~10]{Avramov:2010}. This is a naturally constructed graded subspace of the graded $k$-vector space $\Ext Rkk$, so an element $\zeta \in \pi^n(R)$ represents a morphism $k\to \shift^nk$ in $\dcat R$. For what follows we care only about $\pi^2(R)$, and that can be made explicit.

Let $\rho \colon Q\to R$ be a minimal Cohen presentation of $R$; that is to say, $Q$ is a regular local ring with $\dim Q$ equal to the embedding dimension of $R$. Such a presentation exists when, for example, $R$ is $\fm$-adically complete; this is part of Cohen's structure theorem~\cite[Theorem~A.21]{Bruns/Herzog:1998}. With $J\colonequals \Ker(\rho)$ the image of the composition
\[
\Hom_R(J/J^2,k) \xra[\cong]{\delta^{\rho}(k)} \HCM [2]QRk \xra{\ \psi^R\ } \Ext[2]Rkk
\]
is precisely $\pi^2(R)$, and the composite map is a bijection onto its image; see \cite[Example~10.2.2]{Avramov:2010} or \cite{Sjodin:1976}. 

For any surjective local homomorphism $\vf\colon R\to S$ one gets a map
\[
\pi^2(\vf)\colon \pi^2(S) \lra \pi^2(R)\,,
\]
of $k$-vector spaces, making $\pi^2(-)$ into a functor on the category of local rings and surjective local homomorphisms. Moreover the image of the map
\[
\psi^S\colon \HCM [2]RSk\to \Ext[2]Skk
\]
is $\Ker(\pi^2(\vf))$. This observation is a key ingredient in the proof of the next result.

\begin{theorem}
\label{th:lci-test-complexes}
Let $\vf\colon (R,\fm,k) \to S$ be a surjective homomorphism of local rings with $\pdim_RS$ finite. The following conditions are equivalent:
\begin{enumerate}[\rm(1)]
\item
\label{th:lci-test-complexes1}
$\vf$ is complete intersection;
\item
\label{th:lci-test-complexes2}
For each $t\in \Ker(\pi^2(\vf))$ the $S$-complex $\kos kt$ is proxy small.
\item
\label{th:lci-test-complexes3}
For some generating set $t_1,\dots,t_d$ for the $k$-vector space $\Ker(\pi^2(\vf))$, the $S$-complexes $\kos k{t_i}$ are proxy small.
\end{enumerate}
\end{theorem}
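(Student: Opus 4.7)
The plan is to prove the cycle $(2)\Rightarrow(3)\Rightarrow(1)\Rightarrow(2)$. The implication $(2)\Rightarrow(3)$ is immediate: any $k$-basis $t_1,\ldots,t_d$ of $\Ker(\pi^2(\vf))$ consists of elements of $\Ker(\pi^2(\vf))$, so (2) directly yields proxy smallness of $\kos k{t_i}$ for each $i$.

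For $(1)\Rightarrow(2)$, given $t\in\Ker(\pi^2(\vf))=\Image(\psi^S)$, I would first promote $t$ to Hochschild cohomology. Write $t=\psi^S(\alpha)$ for some $\alpha\in\HCM[2]RSk$. Since $\vf$ is complete intersection the isomorphism \eqref{eq:hh-ci} identifies $\HCM[2]RSk$ with $\HC[2]RS\otimes_S k$, so $\alpha$ lifts to an element $s\in\HC[2]RS$; the factorization $\chi_k^S=\psi^S\circ\HCM RS\epsilon$ recorded in \ref{ch:delta} then gives $t=\chi_k^S(s)$, so $\kos kt=\kos ks$. Theorem~\ref{th:ci-local} supplies that $S$ is proxy small in $\dcat{\env RS}$; Lemma~\ref{le:proxy-central} then produces a small proxy for $\kos Ss$ in $\dcat{\env RS}$, which descends along the base change $S\lotimes_{\env RS}-$ to a small proxy for $\kos ks$ in $\dcat S$.

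For $(3)\Rightarrow(1)$, I would follow the blueprint of the proof of Theorem~\ref{th:ci-local}\ref{th:ci-local5}$\Rightarrow$\ref{th:ci-local1} in \ref{ch:proof-local-ci}. Factor $\vf=\dot\vf\wt\vf$ with $\wt\vf$ complete intersection via a maximal regular sequence in $\Ker(\vf)\setminus\fm\Ker(\vf)$; the goal is to show $\dot\vf$ is an isomorphism. As there, it suffices to verify the hypothesis of Lemma~\ref{le:Yo} for ${\dot\vf}_*(K)$, with $K$ the Koszul complex on a system of parameters of $S$: namely that $\Ext{\wt S}{{\dot\vf}_*(K)}k$ is $(s)$-power torsion for each $s$ in a generating set of the $S$-module $\HC[2]R{\wt S}$. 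For each such $s$, Lemma~\ref{le:key} produces $t\in\Ext[2]Skk$ with ${\dot\vf}_*(\kos kt)\cong\kos ks$; the diagram in the proof of Lemma~\ref{le:lift}, together with the surjections $\HC[2]R{\wt S}\twoheadrightarrow\HCM[2]R{\wt S}k$ from \eqref{eq:hh-ci} applied to $\wt\vf$ and $\HCM[2]RSk\twoheadrightarrow\HCM[2]R{\wt S}k$, allows us to take $t$ in $\Ker(\pi^2(\vf))$. By exploiting the flexibility in choosing generators of $\HC[2]R{\wt S}$, these $t$'s can be arranged to comprise the generating set $t_1,\ldots,t_d$ from (3); hypothesis (3) then supplies the needed proxy smallness of $\kos kt$, and the remainder of the argument in \ref{ch:proof-local-ci} goes through to conclude $\vf$ is complete intersection.

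The principal obstacle is the matching step in $(3)\Rightarrow(1)$: arranging that the $t$'s produced by Lemma~\ref{le:key} from some generating set of $\HC[2]R{\wt S}$ coincide with the preselected basis $t_1,\ldots,t_d$ of $\Ker(\pi^2(\vf))$ afforded by (3). Because the Koszul cone construction $t\mapsto\kos kt$ is not $k$-linear in the class $t$, one cannot simply pass to linear combinations. The resolution rests on selecting generators of $\HC[2]R{\wt S}$ whose images along $\HC[2]R{\wt S}\twoheadrightarrow\HCM[2]R{\wt S}k$, lifted across the surjection $\HCM[2]RSk\twoheadrightarrow\HCM[2]R{\wt S}k$, and then mapped by $\psi^S$, hit the prescribed $t_i$; such a choice should be available by Nakayama's lemma applied to the finitely generated $S$-module $\HC[2]R{\wt S}$ via the identification $\HC[2]R{\wt S}\otimes_{\wt S}k\cong\HCM[2]R{\wt S}k$.
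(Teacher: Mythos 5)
Your overall architecture matches the paper's, and the implications $(2)\Rightarrow(3)$ and $(1)\Rightarrow(2)$ are essentially right; but note that in $(1)\Rightarrow(2)$ the detour through $\dcat{\env RS}$ is both unnecessary and shaky as written: Lemma~\ref{le:proxy-central} applied with $A=\env RS$ would require $s$ to come from the Hochschild cohomology \emph{of} $\env RS$, and $S\lotimes_{\env RS}\kos Ss$ is not $\kos ks$. The intended (and correct) application is directly in $\dcat S$: $k$ is proxy small in $\dcat S$, $t=\chi^S_k(s)$ for some $s\in\HC[2]RS$ by \eqref{eq:hh-ci}, and Lemma~\ref{le:proxy-central} with $A=S$, $M=k$, $\alpha=s$ gives that $\kos kt=\kos ks$ is proxy small.

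The genuine gap is in $(3)\Rightarrow(1)$, precisely at the matching step you flag as the principal obstacle: your Nakayama-based fix does not work. You ask for elements $s_1,\dots,s_d$ of $\HC[2]R{\wt S}$ that simultaneously generate it as an $\wt S$-module and have prescribed images under $\chi^{\wt S}_k$. But $\HC[2]R{\wt S}$ is free of rank $c=$ the length of the regular sequence $\bsx$, so any generating set has at least $c$ elements, whereas $d=\dim_k\Ker(\pi^2(\vf))$ can be strictly smaller than $c$. For instance, with $R=k[[x,y,z,w]]$ and $I=(x,y)+(z,w)^2$ one has $d=3$ while a maximal regular sequence in $I\setminus\fm I$ (e.g.\ $x,y,z^2,w^2$) has length $c=4$; here $\vf$ is not complete intersection, so the case is not vacuous. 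Moreover even when $d\ge c$, the prescription pins each $\bar s_i\in\HCM[2]R{\wt S}k$ only up to $\Ker(\psi^{\wt S})$, which is nonzero whenever some $x_j\notin\fm^2$, so spanning is not automatic. The paper's resolution is different and is the key idea your proposal is missing: one does \emph{not} need the $s_i$ to generate the module $\HC[2]R{\wt S}$. Because the action of $\HC R{\wt S}$ on $\Ext{\wt S}{{\dot\vf}_*(K)}k$ factors through $\chi^{\wt S}_k$ into the graded center of $\Ext{\wt S}kk$ (the centrality relation in \ref{ch:hh-action}), elements of $\Ker(\chi^{\wt S}_k)$ act trivially, and it suffices to check $(s_i)$-power torsion for elements whose images $\chi^{\wt S}_k(s_i)$ merely \emph{span the image} of $\chi^{\wt S}_k$ — a $k$-vector space of dimension $\dim_k\Ker(\pi^2(\wt\vf))\le d$, since $\pi^2(\dot\vf)$ maps $\Ker(\pi^2(\vf))$ onto $\Ker(\pi^2(\wt\vf))$. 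With that observation in place, taking $s_i$ to be any $\chi^{\wt S}_k$-preimages of $\pi^2(\dot\vf)(t_1),\dots,\pi^2(\dot\vf)(t_d)$ completes the argument; without it, Lemma~\ref{le:Yo} cannot be invoked.
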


\begin{proof}
\ref{th:lci-test-complexes1}$\Rightarrow$\ref{th:lci-test-complexes2} This follows from Lemma~\ref{le:proxy-central} and the fact that the natural maps
\[
\HC[2]RS\lra \HCM[2]RSk \xra{\ \psi^S \ } \Ker(\pi^2(\vf))
\]
are surjective. The surjectivity of the map on the left is a consequence \eqref{eq:hh-ci} which holds because $\vf$ is complete intersection. The surjectivity of the map on the right has been commented on earlier. 

\ref{th:lci-test-complexes2}$\Rightarrow$\ref{th:lci-test-complexes3} This is a tautology.

\ref{th:lci-test-complexes3}$\Rightarrow$\ref{th:lci-test-complexes1} The proof for this implication follows that of \ref{th:ci-local5}$\Rightarrow$\ref{th:ci-local1} in Theorem~\ref{th:ci-local}. We keep the notation from there. Recall the factorization of $\vf$:
\[
R\xra{\wt\vf}\wt S\xra{\dot\vf} S
\]
with $\wt\vf$ defined by a maximal regular sequence in $\Ker(\vf)\setminus\fm\Ker(\vf)$. Let $K$  be the Koszul complex on a finite set generating set for the maximal ideal of $\wt S$. As before the strategy is to prove that ${\dot\vf}_*(K)$ is small in $\dcat{\wt S}$. The main point is to verify that the $\HC R{\wt S}$-module $\Ext{\wt S}{{\dot\vf}(K)}k$ is $(s)$-power torsion for all $s$ from a generating set for the $\wt S$-module $\HC[2]R{\wt S}$, for then we can invoke Lemma~\ref{le:Yo}.

The new observation here is that since ${\dot\vf}_*(K)$ is induced from $\dcat S$  we need only worry about a subset of a generating for the $\wt S$-module $\HC[2]R{\wt S}$. Indeed,  consider  maps
\[
\Ext[2]Skk \xra{\ \Ext[2]{\dot\vf}kk\ }\Ext[2]{\wt S}kk \xla{\ \chi^{\wt S}_k\ } \HC[2]R{\wt S}\,.
\]
Since the action of $\HC [2]R{\wt S}$ on $\Ext S{{\dot\vf}_*(K)}k$ factors through $\Ext[2]{\wt S}kk$ it suffices to verify the following statement.

\begin{claim*}
There exist elements $s_1,\dots,s_d$ in $\HC [2]R{\wt S}$ such that 
\begin{enumerate}[\quad\rm(a)]
\item
$\chi^{\wt S}_k(s_1),\dots,\chi^{\wt S}_k(s_d)$ span the image of $\chi^{\wt S}_k$ as a $k$-vector space;
\item
$\chi^{\wt S}_k(s_i)$ is  the image of $t_i$  under the map $\Ext[2]{\dot\vf}kk$, for all $i$.
\end{enumerate}

Once we verify this claim, arguing as in the proof of Theorem~\ref{th:ci-local} leads to the desired conclusion.
\end{claim*}

As to verifying the claim, the functoriality of $\pi^2(-)$ gives a commutative diagram
\[
\begin{tikzcd}
\pi^2(S)\ar[r,"{\pi^2(\dot\vf)}"]	\ar[rr, bend right=20, "{\pi^2(\vf)}" swap]
		&\pi^2(\wt S)\ar[r,"{\pi^2(\wt\vf)}"] & \pi^2(R).
\end{tikzcd}
\]
Thus one gets an induced map $\pi^2(\dot\vf)$ in the following diagram of $k$-vector spaces:
\[
\begin{tikzcd}
\HCM[2]RSk \ar[d, twoheadrightarrow, "{\psi^{S}}" swap] \ar[r, twoheadrightarrow] 
	& \HCM[2]R{\wt S}k \ar[d, twoheadrightarrow, "{\psi^{\wt S}}" swap] 
	      &\HC[2]R{\wt S} 	\ar[l, twoheadrightarrow]	\ar[dl, "\chi^{\wt S}_k"]\\
\Ker(\pi^2(\vf)) \ar[r, "{\pi^2(\dot\vf)}"] 
	&\Ker(\pi^2(\wt\vf)) \,.
\end{tikzcd}
\]
The commutative square is from Lemma~\ref{le:lift}.  The surjectivity of the map pointing left is from \eqref{eq:hh-ci} applied to $\wt\vf$.  It remains to take $s_1,\dots,s_d$ to be any preimages of $\pi^2(\dot\vf)(t_1),\dots, \pi^2(\dot\vf)(t_d)$ under $\chi^{\wt S}_k$.
\end{proof}
\end{chunk}

\begin{chunk}
\label{ch:ci-rings}
A local ring $(S,\fn)$ is \emph{complete intersection} if for some (equivalently, any) Cohen presentation $\vf\colon R\to \widehat{S}$ of the $\fn$-adic completion of $S$, the map $\vf$ is complete intersection; see \cite[Section~2.3]{Bruns/Herzog:1998}. A commutative noetherian ring $S$ is \emph{locally complete intersection} if it is complete intersection at each prime ideal of $S$. Theorem~\ref{th:ci-local} applied locally to Cohen presentations recovers a characterization of complete intersections established in \cite[Theorem~5.2]{Pollitz:2019} and \cite[Corollary~5.6]{Letz:2020}.

\begin{corollary}
\label{cor:josh}
Let $S$ be a commutative noetherian ring. The following conditions are equivalent:
\begin{enumerate}[\quad\rm(1)]
\item $S$ is locally complete intersection;
\item Each object in $\dbcat S$ is proxy small;
\item Each object in $\dcatfl S$ is proxy small. \qed
\end{enumerate}
\end{corollary}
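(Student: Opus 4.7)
The implication $(2)\Rightarrow(3)$ is immediate from $\dcatfl S\subseteq\dbcat S$. For the remaining implications, my plan is to reduce to the case where $S$ is local via Proposition~\ref{pr:local-to-global} and then apply Theorem~\ref{th:ci-local} to a Cohen presentation $\vf\colon R\twoheadrightarrow\widehat S$ of the $\fn$-adic completion, where $R$ is a regular local ring (which exists by Cohen's structure theorem). Since $R$ is regular, $\pdim_R\widehat S<\infty$, and every object of $\dbcat{\widehat S}$ viewed over $R$ is perfect, hence small, and so proxy small in $\dcat R$.

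For $(3)\Rightarrow(1)$, it suffices to show that $\widehat{S_\fn}$ is complete intersection for each maximal ideal $\fn$ of $S$. I would verify condition~\ref{th:ci-local5} of Theorem~\ref{th:ci-local} for the Cohen presentation $\vf\colon R\twoheadrightarrow\widehat{S_\fn}$. The key observation is that any $N\in\dcatfl{\widehat{S_\fn}}$ is also in $\dcatfl S$, since finite length modules on either side are $S/\fn^k$-modules and $S/\fn^k=\widehat{S_\fn}/\fn^k\widehat{S_\fn}$. Hypothesis (3) then provides a small proxy for $N$ in $\dcat S$; applying $\widehat{S_\fn}\lotimes_S-$ and using that $\widehat{S_\fn}\lotimes_S N\simeq N$ (as $N$ is already $\fn$-complete) yields, via~\ref{ch:proxy-tensor-closed}, a small proxy for $N$ in $\dcat{\widehat{S_\fn}}$. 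Hence the ascent condition~\ref{th:ci-local5} holds, and Theorem~\ref{th:ci-local}\ref{th:ci-local5}$\Rightarrow$\ref{th:ci-local1} forces $\vf$ to be complete intersection.

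For $(1)\Rightarrow(2)$ (which subsumes $(1)\Rightarrow(3)$): given $M\in\dbcat S$, set $I=\ann_S{\hh M}$ and let $K=K_S(\bsx)$ be the Koszul complex on a finite generating set $\bsx$ of $I$; then $\supp K=V(I)=\supp M$, so by Corollary~\ref{co:proxy-support} proxy smallness of $M$ amounts to $M\fbuilds[S] K$. Proposition~\ref{pr:local-to-global} localizes this prime by prime: outside $V(I)$ both sides vanish, while inside $V(I)$ the ring $S_\fp$ is local complete intersection, reducing the problem to the local case. In the local setting, $\widehat M$ viewed over $R$ is a perfect $R$-complex (as $R$ is regular), hence proxy small in $\dcat R$; Theorem~\ref{th:ci-local}\ref{th:ci-local1}$\Rightarrow$\ref{th:ci-local4} then gives $\widehat M$ proxy small in $\dcat{\widehat S}$, equivalently $\widehat M\fbuilds[\widehat S]\widehat K$.

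The main obstacle is the descent step: concluding $M\fbuilds[S] K$ from $\widehat M\fbuilds[\widehat S]\widehat K$ when $M,K\in\dbcat S$. The case $M\in\dcatfl S$ is easy, completing $(1)\Rightarrow(3)$: when $\hh M$ is finite length, so is $\hh K$, so the natural map $K\to\widehat K=\widehat S\otimes_S K$ induces an isomorphism on homology (using $\widehat S\otimes_S\hh K\cong\hh K$ via $S/\fn^k=\widehat S/\fn^k\widehat S$), hence is a quasi-isomorphism in $\dcat S$; similarly $\widehat M\simeq M$ in $\dcat S$, and restriction along $S\to\widehat S$ preserves finite building by~\ref{ch:rest}, yielding $M\fbuilds[S] K$ at once. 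For general $M\in\dbcat S$ this argument breaks, since $\widehat S\otimes_S\hh K\ne\hh K$ once $\hh K$ has positive-dimensional support, and the descent instead requires the finer local analysis carried out in \cite[Theorem~5.2]{Pollitz:2019}, which the present framework is designed to recover as an application of Theorem~\ref{th:ci-local}.
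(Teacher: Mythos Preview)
Your strategy matches the paper's: apply Theorem~\ref{th:ci-local} locally to Cohen presentations. The paper gives no details beyond this, marking the corollary with \qed and citing \cite{Pollitz:2019} and \cite{Letz:2020}. Your arguments for $(2)\Rightarrow(3)$, $(3)\Rightarrow(1)$, and the finite-length case $(1)\Rightarrow(3)$ are correct.

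The gap you flag in $(1)\Rightarrow(2)$ is real, but your proposed resolution is circular. After localizing, completing, and establishing $\widehat M\fbuilds[\widehat{S}]\widehat K$ via Theorem~\ref{th:ci-local}, one must descend to $M\fbuilds[S]K$; your trick $\widehat M\simeq M$ in $\dcat S$ works only when the homology has finite length. Appealing to \cite[Theorem~5.2]{Pollitz:2019} at this point defeats the purpose, since the corollary is meant to \emph{recover} that result from Theorem~\ref{th:ci-local}. The missing ingredient is descent of finite building along the faithfully flat completion map $S\to\widehat S$ for objects of $\dbcat S$: this is not a consequence of Theorem~\ref{th:ci-local} or of Proposition~\ref{pr:local-to-global} (which handles localization, not completion), but it is available in \cite{Letz:2020}, the other reference the paper cites for this corollary. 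With that in hand your argument closes. Put differently, the genuinely new content Theorem~\ref{th:ci-local} contributes to Corollary~\ref{cor:josh} is the converse direction $(3)\Rightarrow(1)$, which you handle correctly; the forward direction still leans on outside input, and your write-up should name that input rather than the result being recovered.
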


A natural question is whether the locally complete intersection property  is detected by proxy smallness of modules, that is to say, $S$-complexes with homology concentrated in a single degree; see \cite{Briggs/Grifo/Pollitz:2020} for some positive results.

\end{chunk}

\section{Factorization of locally complete intersection maps}
\label{sec:applications}
In this section we demonstrate the strength of Theorem~\ref{th:ci-local} by deducing some fundamental results on complete intersection rings and maps. We restrict ourselves to treating surjective local maps for reasons laid out in Remark~\ref{re:general-lci}. As a warm up we recover a well-known result  tracking the complete intersection property along maps of finite projective dimension. 

\begin{corollary}
\label{cor:ci-ad}
Let $\vf\colon R\to S$ be a surjective local map with $\pdim_RS$ finite. Then the ring $S$ is complete intersection if and only if the ring  $R$ and the map $\vf$ are complete intersection.
\end{corollary}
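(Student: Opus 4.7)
The plan is to reduce both implications to the classical fact that, for a regular sequence $\bsx$ in a local ring $R$, the ring $R$ is complete intersection if and only if $R/(\bsx)$ is; see \cite[Corollary~2.3.4]{Bruns/Herzog:1998}. Once $\vf$ is known to be complete intersection, its kernel is generated by a regular sequence $\bsx$ and $S=R/(\bsx)$, so both directions of the corollary collapse onto this classical fact. The one genuinely new step is to show that complete intersection of $S$ already forces $\vf$ to be complete intersection.

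For the forward direction, I would first extract that $\vf$ is complete intersection from $S$ being complete intersection by invoking Theorem~\ref{th:ci-local}\ref{th:ci-local5}. The finiteness of $\pdim_R S$ is given, so I only need to verify the ascent requirement, and this will be vacuous: by Corollary~\ref{cor:josh}, every object of $\dcatfl S$ is already proxy small in $\dcat S$, so ascent of proxy smallness of such objects along $\vf$ holds trivially. Theorem~\ref{th:ci-local} then delivers that $\vf$ is locally complete intersection, hence complete intersection since both rings are local. Writing $S=R/(\bsx)$ with $\bsx$ a regular sequence, the classical fact above upgrades the conclusion to $R$ being complete intersection.

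The backward direction is immediate from the same setup: if $\vf$ is complete intersection then once again $S=R/(\bsx)$ for a regular sequence $\bsx$ in $R$, and the classical fact turns $R$ complete intersection into $S$ complete intersection. The main, and essentially only, nontrivial step is thus the extraction of complete intersection of $\vf$ from that of $S$ in the forward direction, and this is handled cleanly by feeding Corollary~\ref{cor:josh} into Theorem~\ref{th:ci-local}\ref{th:ci-local5}; there is no genuine obstacle to confront.
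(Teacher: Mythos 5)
Your proposal is correct. The one step that genuinely requires the machinery of this paper --- extracting that $\vf$ is complete intersection from $S$ being complete intersection, by feeding Corollary~\ref{cor:josh} into Theorem~\ref{th:ci-local}\ref{th:ci-local5} --- is handled exactly as in the paper's proof. Where you diverge is in everything after that: once $\Ker(\vf)$ is known to be generated by a regular sequence $\bsx$, you transfer the complete intersection property between $R$ and $S=R/(\bsx)$ (in both directions of the corollary) by citing the classical fact that a local ring is complete intersection if and only if its quotient by a regular sequence is (Bruns--Herzog, Theorem~2.3.4 --- a theorem, not a corollary, but the content is as you state, and since it is the special case of the present corollary in which $\vf$ is already complete intersection, there is no circularity). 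The paper instead stays entirely inside its proxy-smallness framework: for ``$S$ ci $\Rightarrow R$ ci'' it shows every nonzero $M\in\dcatfl R$ finitely builds a nonzero small object by pushing $M$ to $S\lotimes_RM$, using that $S$ is complete intersection to build a small $S$-complex, and using $\pdim_RS<\infty$ to bring smallness back to $R$; for the converse it chains Corollary~\ref{cor:josh} through Theorem~\ref{th:ci-local}\ref{th:ci-local4}. Your route is shorter and leans on standard commutative algebra; the paper's is longer but self-contained and better serves the section's stated purpose of deriving these classical facts from the proxy-smallness characterizations alone. Both arguments are sound.
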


\begin{proof}
Assume $S$ is complete intersection. By Corollary~\ref{cor:josh} objects in $\dcatfl S$ are proxy small so the condition in Theorem~ \ref{th:ci-local}\ref{th:ci-local5} holds trivially and hence $\vf$ is complete intersection. To verify that $R$ is  complete intersection it suffices to verify that any nonzero $M$ in $\dcatfl R$ finitely builds a nonzero small object; this is where the hypothesis that $\hh M$ has finite length is used. Since $R\fbuilds S$, by hypothesis, one gets that $M\fbuilds[R] S\lotimes_RM$. View $S\lotimes_RM$ as an object in $\dcat S$. Since $\pdim_RS$ is finite, the length of the $S$-module  $\hh{S\lotimes_RM}$ is finite; it is also nonzero by Nakayama's lemma. Since $S$ is complete intersection $S\lotimes_RM$ builds a nonzero small object in $\dcat S$, and that object is also small in $R$, since $\pdim_RS$ is finite.  This is the desired conclusion.

Suppose $R$ and $\vf$ are complete intersection. The hypothesis on $R$ implies that any $M\in\dbcat S$ is proxy small in $\dcat R$, by Corollary~\ref{cor:josh}, and then the hypothesis on $\vf$ implies that $M$ is also proxy small in $\dcat S$, by Theorem~\ref{th:ci-local}. Another application of Corollary~\ref{cor:josh} yields that $S$ is complete intersection.
\end{proof} 

The forward implication in the result below is easy to prove directly from the definitions; it is equally simple to deduce it from Theorem~\ref{th:ci-local}. The converse statement is due to Avramov~\cite[5.7]{Avramov:1999}. The proof in \emph{op.~cit.} is complicated and involves nontrivial properties of Andr\'e--Quillen homology. The proof presented below is more elementary and natural from the perspective of ascent of proxy smallness.

\begin{corollary}
\label{cor:lcifac}
Let $R\xra{\vf}S\xra{\psi}T$ be surjective local homomorphisms.

If the maps $\vf$ and $\psi$ are complete intersection, then so is $\psi\vf$. The converse holds if, in addition, $\pdim_ST$ is finite.
\end{corollary}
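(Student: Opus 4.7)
The plan is to reduce both implications to Theorem~\ref{th:ci-local}.

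For the forward implication, assume $\vf$ and $\psi$ are complete intersection. Theorem~\ref{th:ci-local}\ref{th:ci-local4} then yields $\pdim_R S$ and $\pdim_S T$ both finite, and proxy smallness ascending along each map. Splicing a finite $S$-free resolution of $T$ with finite $R$-free resolutions of each of its terms shows $\pdim_R T<\infty$; and any $T$-complex $M$ proxy small in $\dcat R$ is proxy small in $\dcat S$ (by ascent along $\vf$) and then proxy small in $\dcat T$ (by ascent along $\psi$). A further application of Theorem~\ref{th:ci-local}\ref{th:ci-local4} delivers $\psi\vf$ complete intersection.

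For the converse, assume $\psi\vf$ is complete intersection and $\pdim_S T<\infty$. My first goal would be to derive $\pdim_R S<\infty$ from these hypotheses. Since $T$ is small in $\dcat R$ (from $\psi\vf$ being CI) and in $\dcat S$ (by hypothesis), I would obtain this by a change-of-rings argument that compares the Koszul resolution of $T$ over $R$ with a finite $S$-free resolution of $T$; equivalently, the change-of-rings spectral sequence $\Tor[p]{S}{T}{\Tor[q]{R}{S}{T}}\Rightarrow\Tor[p+q]{R}{T}{T}$ can be used to control the $R$-projective dimension of $S$. I expect this to be the most delicate step of the proof. With $\pdim_R S<\infty$ in hand, ascent of proxy smallness along $\psi$ follows by transitivity: a $T$-complex $N$ proxy small in $\dcat S$ has a small proxy $K\in\dcat S$ that remains small in $\dcat R$ (using $\pdim_R S<\infty$), so $N$ is proxy small in $\dcat R$, and hence in $\dcat T$ by ascent along $\psi\vf$. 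Theorem~\ref{th:ci-local}\ref{th:ci-local4} then gives $\psi$ complete intersection.

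Finally, to obtain $\vf$ complete intersection, I would use a local algebra argument: writing $\Ker(\psi\vf)=K$ and $\Ker(\psi)=I$, the former is generated by a regular sequence $z_1,\ldots,z_d$ in $R$ and the latter by a regular sequence $s_1,\ldots,s_e$ in $S$. Lifting the $s_i$ to elements $\tilde s_i\in K\subseteq R$, one can choose a regular sequence generating $K$ whose first $e$ members project to generators of $I$ in $S$; the remaining $d-e$ members then lie in $\Ker(\vf)$ and form a regular sequence generating it, exhibiting $\vf$ as complete intersection.
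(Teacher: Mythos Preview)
Your forward implication and your deduction that $\psi$ is complete intersection match the paper's. The other two steps in the converse need attention.

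The spectral sequence you name converges to $\Tor[*]{R}{T}{T}$, which is automatically bounded since $\pdim_RT<\infty$; it therefore places no constraint on $\Tor[*]{R}{S}{-}$. Replacing the second $T$ by the residue field $k$ does work: lifting a minimal $S$-free resolution of $T$ term by term to a minimal $R$-free resolution yields the Poincar\'e-series identity $P^R_T(t)=P^S_T(t)\,P^R_S(t)$, and since the left side and $P^S_T$ are polynomials with non-negative coefficients and $P^S_T(0)=1$, so is $P^R_S$. The paper takes a shorter path, citing \cite[Remark~5.6]{Dwyer/Greenlees/Iyengar:2006b}: from $R\fbuilds[R]T$ and $S\fbuilds[S]T$ one gets $R\fbuilds[R]S$ directly.

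For $\vf$ your route is genuinely different. The paper stays within Theorem~\ref{th:ci-local}, verifying condition~\ref{th:ci-local5}: for $M\in\dcatfl S$ proxy small over $R$ one has $M\fbuilds[S]T\lotimes_SM$, and via the Koszul resolution of $T$ over $S$ the complex $T\lotimes_SM$ is proxy small over $R$, hence over $T$ by ascent along $\psi\vf$, hence over $S$. Your direct regular-sequence argument is more elementary but needs two repairs as written. First, the last $d-e$ members of a completion of $\tilde s_1,\dots,\tilde s_e$ to a minimal generating set of $K$ do not automatically lie in $J=\Ker\vf$; you must \emph{choose} them there, which is possible because the kernel of $K/\fm K\thra I/\fn I$ equals the image of $J$ and has dimension $d-e$. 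Any $d$ generators of the complete intersection ideal $K$ then form a regular sequence, so $t_1,\dots,t_{d-e}$ is one after permuting. Second, you must still argue $(t_1,\dots,t_{d-e})=J$: writing $a\in J$ as $\sum b_i\tilde s_i+\sum c_jt_j$ and reducing modulo $J$ gives a Koszul relation among the $s_i$, from which $\sum b_i\tilde s_i\in\fm J$, and Nakayama finishes. With these fixes your argument goes through; its virtue is that it avoids derived-category machinery once $\psi$ is established, whereas the paper's argument illustrates the thesis of Section~\ref{sec:applications} that factorization theorems follow formally from Theorem~\ref{th:ci-local}.
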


\begin{proof}
We make repeated use of Theorem~\ref{th:ci-local} without specific reference.

Suppose $\vf$ and $\psi$ are complete intersection. Since $R\fbuilds[R]S$ and $S\fbuilds[S]T$ and hence $S\fbuilds[R]T$, it follows that $R\fbuilds[R]T$, that is to say, $\pdim_RT$ is finite. Moreover  given an $M\in\dcat T$, if $M$ is proxy small in $\dcat R$, then it is proxy small in $\dcat S$, since $\vf$ is complete intersection, and hence in $\dcat T$, since $\psi$ is complete intersection. Thus proxy smallness ascends along $\psi\vf$ and hence it is a complete intersection.

Now suppose $\psi\vf$ is complete intersection and $\pdim_ST$ is finite, that is to say, $S\fbuilds[S]T$. The first condition implies $R\fbuilds[R]T$ and then the second one implies $R\fbuilds[R]S$, by \cite[Remark~5.6]{Dwyer/Greenlees/Iyengar:2006b}. It follows that $\psi$ is complete intersection: Any $T$-complex that is proxy small in $\dcat S$ is proxy small in $\dcat R$, since $R\fbuilds[R]S$, and hence is proxy small in $\dcat T$, since $\psi\vf$ is complete intersection. 

Now $\psi\vf$ and $\psi$ are both complete intersection; we prove that so is $\vf$. Fix $M$ in $\dcat S$ with nonzero finite length homology such that $M$ is proxy small in $\dcat R$. It suffices to prove that $M$ finitely builds a small object in $\dcat S$ with nonzero homology; this is where we need to use the assumption that $\hh M$ has finite length.

Since $S\fbuilds[S]T$ one gets that $M\fbuilds[S] T\lotimes_SM$. Let $\bsx$ be a finite set of elements in $R$ whose images in $S$ form a minimal generating set for the ideal $\Ker(\psi)$, and let $K$ be the Koszul complex on $\bsx$, with coefficients in $R$. Since $\psi$ is complete intersection, the $S$-complex $K\otimes_RS$ is a resolution of $T$ over $S$. This justifies the second of the following isomorphisms in $\dcat R$:
\[
K\otimes_R M \simeq (K\otimes_RS)\otimes_S M \simeq T\lotimes_SM\,.
\]
The first one is by associativity of tensor products. As $M$ proxy small in $R$ it follows that so is $T\lotimes_SM$. However as $T\lotimes_SM$ is in $\dcat T$, the complete intersection property of $\psi\vf$ implies that $T\lotimes_SM$ is proxy small in $\dcat T$ and hence  in $\dcat S$. Thus $T\lotimes_SM$, and hence also $M$, finitely builds a small $S$-complex with nonzero homology.
\end{proof}

\begin{remark}
In Corollary~\ref{cor:lcifac} one cannot weaken the hypothesis in the converse that $\pdim_ST$ is finite to $\psi$ is proxy small. Indeed, let $R$ be a regular local ring with residue field $k$ and consider surjective local homomorphisms
$R \xra{\vf} S\xra{\psi} k$.
Then $\psi\vf$ is complete intersection, and $\psi$ is proxy small, but $\vf$ is complete intersection if and only if $S$ is, and that need not be the case.
\end{remark}

However  Corollary~\ref{cor:ci-ad} yields the following result.

\begin{corollary}
Let $R\xra{\vf}S\xra{\psi}T$ be surjective local homomorphisms such that $S$ and the map $\psi\vf$ are complete intersection. Then the rings $R$ and $T$, and the map $\vf$, are complete intersection.
\end{corollary}

\begin{proof}
As $\psi\vf$ is complete intersection $\pdim_RT$ is finite.  Since $T$ is proxy small in $\dcat S$, by Corollary~\ref{cor:josh}, from \cite[Theorem~5.5]{Dwyer/Greenlees/Iyengar:2006b} one gets that $\pdim_RS$ is finite. Then as $S$ is complete intersection it follows that $R$ and $\vf$ are complete intersection as well, by Corollary~\ref{cor:ci-ad}. Another application of Corollary~\ref{cor:ci-ad}, now to the map $\psi\vf$, yields that $T$ is complete intersection.
\end{proof}

\section{Essentially of finite type maps}
\label{sec:eft}

\begin{chunk}
\label{ch:smooth}
A morphism $\vf\colon R\to S$ of commutative rings is \emph{essentially of finite type} if it is obtained as the localization of a finitely generated $R$-algebra; that it to say, $\vf$ admits a factorization 
\begin{equation}
\label{eq:smooth-factorization}
R \xra{\ \wt\vf\ } \wt R \xra{\ \dot\vf\ } S
\end{equation}
where $\wt R=U^{-1}R[\bsx]$, where $\bsx\colonequals x_1,\dots,x_n$ are indeterminates, $U$ is a multiplicatively closed subset in $R[\bsx]$, and $\dot\vf$ is surjective. Such a map is \emph{smooth} if $\vf$ is flat, and for each map of rings $R\to l$ with $l$ a field, the ring $l\otimes_RS$ is regular; that is to say, the fibers of $\vf$ are geometrically regular; see \cite{Grothendieck:1967}. For example, the map $\wt\vf$ in the factorization above is smooth. 

The map $\vf$ is \emph{locally complete intersection} if the surjection $\dot\vf \colon \wt R\to S$ is locally complete intersection in the sense of \ref{ch:lci-surjective}. This condition is independent of the factorization of $\vf$; this fact is also implicit in the proof of Theorem~\ref{th:global-eft} below. When $\vf$ is flat, it is locally complete intersection if and only if all its fibers are locally complete intersection rings. For details, see \cite{Grothendieck:1967}.
\end{chunk}

The result below is analogous to a classical characterization of smooth maps, recalled in the introduction, namely: \emph{$\vf$ is smooth if and only if it is flat and $S$ is small in $\dcat{\env RS}$}; see \cite[Proposition~(17.7.4)]{Grothendieck:1967} and \cite[Theorem~1]{Rodicio:1990}. A crucial difference: We do not have to assume $\vf$ is flat, only that $\Tor RSS$ is bounded. This comes with a caveat: In the statement $\env RS$ is the derived enveloping algebra.

\begin{theorem}
\label{th:global-eft}
Let $R$ be a commutative noetherian ring and $\vf\colon R \to S$ a morphism essentially of finite type. The following conditions are equivalent:
\begin{enumerate}[\quad\rm(1)]
\item $\vf$ is locally complete intersection;
\item $S$ is proxy small in $\dcat{\env RS}$ and $\Tor[i] RSS = 0$ for all $i \gg 0$.
\end{enumerate}
\end{theorem}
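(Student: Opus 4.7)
The plan is to reduce Theorem~\ref{th:global-eft} to the surjective case handled by Theorem~\ref{th:ci-local} via the standard smooth factorization. Write $\vf$ as a composite $R \xra{\wt\vf} \wt R \xra{\dot\vf} S$ with $\wt\vf\colon R\to\wt R = U^{-1}R[x_1,\dots,x_n]$ smooth and $\dot\vf$ surjective. By definition, $\vf$ is locally complete intersection if and only if $\dot\vf$ is (see \ref{ch:smooth}), so Theorem~\ref{th:ci-local} applied to $\dot\vf$ reduces the assertion to showing that condition~(2) of Theorem~\ref{th:global-eft} is equivalent to the pair of conditions that $S$ is proxy small in $\dcat{\env{\wt R}S}$ and $\Tor[i]{\wt R}SS = 0$ for $i\gg 0$.

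The comparison between $\env RS$ and $\env{\wt R}S$ rests on the following derived base change. Smoothness of $\wt\vf$ gives a Koszul resolution of $\wt R$ over $\env R{\wt R}$ on the regular sequence $x_i\otimes 1-1\otimes x_i$, and derived base change along the ring map $\env R{\wt R}\to\env RS$ induced by $\dot\vf\otimes\dot\vf$ produces a quasi-isomorphism
\[
\env{\wt R}S \;\simeq\; \env RS \lotimes_{\env R{\wt R}} \wt R\,,
\]
which realizes $\env{\wt R}S$ as the Koszul dg algebra $\env RS\langle \xi_i\mid d\xi_i = b_i\rangle$ over $\env RS$, where $b_i \colonequals a_i\otimes 1-1\otimes a_i$ and $a_i \colonequals \dot\vf(x_i)$. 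In particular $\env{\wt R}S$ is a bounded complex of free $\env RS$-modules, hence small in $\dcat{\env RS}$.

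For the forward implication, assume $\vf$ is lci. Then $\dot\vf$ is lci and Theorem~\ref{th:ci-local} furnishes a small proxy $K$ for $S$ in $\dcat{\env{\wt R}S}$. Because $\env{\wt R}S$ is small in $\dcat{\env RS}$, restriction along the dg algebra map $\pi\colon \env RS\to\env{\wt R}S$ preserves small objects; combined with \ref{ch:rest}, the restriction $\pi_*K$ is a small proxy for $S$ in $\dcat{\env RS}$. For the boundedness of $\Tor[i]RSS$, surjectivity and lci-ness of $\dot\vf$ give $\fdim_{\wt R}S < \infty$, and flatness of $\wt\vf$ then implies $\fdim_R S \leq \fdim_{\wt R}S < \infty$, whence $\Tor[i]RSS = 0$ for $i\gg 0$.

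For the reverse implication, assume condition~(2). The Koszul presentation displayed above at once implies that $\hh{\env{\wt R}S}$ is bounded whenever $\hh{\env RS}$ is. Given a small proxy $K$ for $S$ in $\dcat{\env RS}$, set $K'\colonequals \env{\wt R}S \lotimes_{\env RS} K$, which is small in $\dcat{\env{\wt R}S}$. The essential calculation is
\[
\env{\wt R}S \lotimes_{\env RS} S \;\simeq\; \wt R \lotimes_{\env R{\wt R}} S \;\simeq\; \Lambda_S(\xi_1,\dots,\xi_n)
\]
with vanishing differential, since each $x_i\otimes 1-1\otimes x_i$ maps to $a_i - a_i = 0$ in $S$. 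In $\dcat{\env{\wt R}S}$ this exterior algebra splits as the finite direct sum $\bigoplus_{i=0}^{n} \shift^i S^{\binom{n}{i}}$, so $S$ is a retract of $\env{\wt R}S\lotimes_{\env RS}S$. The base change functor therefore takes the relations $K \builds S$ and $S \fbuilds K$ in $\dcat{\env RS}$ to relations exhibiting $K'$ as a small proxy for $S$ in $\dcat{\env{\wt R}S}$, and Theorem~\ref{th:ci-local} concludes that $\dot\vf$ is lci, hence so is $\vf$. The principal obstacle is precisely this reverse transfer of proxy smallness: base change along $\pi$ does not preserve the dg module $S$ but produces a Koszul-type exterior algebra $\Lambda_S$, and one must recognize that $\Lambda_S$ retracts onto $S$ in $\dcat{\env{\wt R}S}$ to make the small-proxy argument go through.
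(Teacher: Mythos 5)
Your proposal is correct and follows essentially the same route as the paper: reduce to the surjective map $\dot\vf$ via the smooth factorization, compare the two enveloping algebras through the Koszul presentation of $\env{\wt R}S$ over $\env RS$, and then invoke Theorem~\ref{th:ci-local}. The only difference is that where the paper cites \cite[Remark~9.2]{Dwyer/Greenlees/Iyengar:2006b} for the ascent and descent of proxy smallness along $\theta\colon \env RS\to \env{\wt R}S$, you prove this directly---descent by restriction of scalars (using that $\env{\wt R}S$ is small in $\dcat{\env RS}$) and ascent by base change together with the observation that $S$ is a retract of $\env{\wt R}S\lotimes_{\env RS}S\simeq \Lambda_S(\xi_1,\dots,\xi_n)$---which is a correct and self-contained substitute for that citation.
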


This result specializes to Theorem~\ref{ithm:eft} from the Introduction.

\begin{proof}
Fix a factorization \eqref{eq:smooth-factorization} of $\vf$. As noted before $\vf$ is locally complete intersection if and only if $\dot\vf$ is locally complete intersection. When this property holds  $\pdim_{\wt R}S$ is finite and hence so is $\pdim_RS$; in particular $\Tor[i]RSS=0$ for $i\gg 0$. Thus in the remainder of the proof we  assume that $\Tor RSS$ is bounded.

Next we reduce to the case where $\vf$ is surjective. To that end consider the morphism of dg algebras
\[
\theta\colon \env RS\lra \env{\wt R}{S} 
\]
induced by $\wt\vf$. The following claim implies that condition (2) holds for $\vf$ if and only if it holds for $\dot\vf$. 

\begin{claim*}
$\Tor {\wt R}SS$ is bounded and proxy smallness ascends and descends along $\theta$.
\medskip
 
The crucial point is that $\theta$ can be obtained by base change of a locally complete intersection map, so it is only a definition away from being itself locally complete intersection; see \cite[Definition~2.4]{Shaul:2020}. Here are the details.

Consider the natural multiplication map $\mu\colon \env R{\wt R}\to \wt R$. The diagonal isomorphism yields a quasi-isomorphism of dg algebras
\[
\wt R \lotimes_{\env R{\wt R}} \env RS = \wt R \lotimes_{\env R{\wt R}} (S\lotimes_R S) \simeq S\lotimes_{\wt R} S = \env {\wt R}S\,.
\]
Thus $\theta$ is the base change of $\mu$ along the morphism $\env R{\wt R}\to \env RS$, that is to say, there is a cofiber square
\[
\begin{tikzcd}
\env R{\wt R} \ar[r,"\mu"]\ar[d, swap, "\dot{\vf}\lotimes_R\dot{\vf}"] & \wt R \ar[d] \\
\env R S \ar[r,swap,"\theta"] & \env {\wt R}S\,.
\end{tikzcd}
\]
Recall that $\wt R$ is a localization of a polynomial ring $R[\bsx]$ where $\bsx\colonequals x_1,\dots,x_n$ are indeterminates. Thus $\env R{\wt R}$ may be taken to be the ordinary tensor product $\wt R\otimes_R\wt R$ and the kernel of the multiplication map $\env R{\wt R}\to \wt R$ is generated by the regular sequence $x_1\otimes 1 - 1\otimes x_1,\dots, x_n\otimes 1 - 1\otimes x_n$. The Koszul complex on this sequence is a dg algebra resolution of $\wt R$ over $\env R{\wt R}$, so we deduce that map $\theta$ can also be realized as an extension by a Koszul complex. The first part of the desired claim is now a standard verification. For the ascent and descent of proxy smallness along $\theta$ see \cite[Remark~9.2]{Dwyer/Greenlees/Iyengar:2006b}.
\end{claim*}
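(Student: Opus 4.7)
The plan is to complete the reduction to the surjective case that the cofiber square sets up, and then close the proof by invoking Theorem~\ref{th:ci-local} applied to the surjection $\dot\vf\colon \wt R\to S$.

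First I would finish the verification of the claim. Because $\wt R = U^{-1}R[\bsx]$ is $R$-flat, we may represent $\env R{\wt R}$ by the honest tensor product $\wt R\otimes_R\wt R$, and the kernel of $\mu\colon \env R{\wt R}\to \wt R$ is generated by the regular sequence $x_i\otimes 1-1\otimes x_i$ for $i=1,\dots,n$. The Koszul complex on this sequence is a finite free resolution of $\wt R$ over $\env R{\wt R}$, so $\wt R$ is small in $\dcat{\env R{\wt R}}$. Base changing across the cofiber square displays $\env{\wt R}S\simeq \wt R\lotimes_{\env R{\wt R}}\env RS$ as a bounded complex of finitely generated free $\env RS$-modules; in particular $\Tor{\wt R}SS$ is bounded once $\Tor RSS$ is. The ascent and descent of proxy smallness along $\theta$ is then an instance of \cite[Remark~9.2]{Dwyer/Greenlees/Iyengar:2006b}, since $\theta$ is the base change of a small-algebra extension. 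Taken together, these show that, under the standing assumption that $\Tor RSS$ is bounded, condition~(2) of the theorem holds for $\vf$ if and only if it holds for $\dot\vf$.

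Second, I would apply Theorem~\ref{th:ci-local} to the surjective homomorphism $\dot\vf\colon \wt R\to S$. That result yields the equivalence of $\dot\vf$ being locally complete intersection with the simultaneous conditions that $S$ is proxy small in $\dcat{\env{\wt R}S}$ and $\Tor{\wt R}SS$ is bounded, that is, condition~(2) for $\dot\vf$. Since $\vf$ is locally complete intersection exactly when $\dot\vf$ is, by the definition recalled in \ref{ch:smooth}, the theorem follows.

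The main obstacle is the reduction itself: all the hard content is already absorbed into Theorem~\ref{th:ci-local}, and what remains is to transfer condition~(2) across the morphism $\theta$. The reduction goes through cleanly because the Koszul structure of $\mu$ makes $\theta$ a base change of a small-algebra extension, so that both the bounded Tor condition and proxy smallness of $S$ transfer transparently between $\env RS$ and $\env{\wt R}S$.
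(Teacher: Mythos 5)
Your proposal is correct and follows essentially the same route as the paper: you identify $\theta$ as the base change of the multiplication map $\mu\colon \env R{\wt R}\to \wt R$ via the diagonal isomorphism, use the Koszul resolution of $\wt R$ over $\wt R\otimes_R\wt R$ on the regular sequence $x_i\otimes 1-1\otimes x_i$ to realize $\theta$ as an extension by a Koszul complex (whence $\Tor{\wt R}SS$ is bounded given that $\Tor RSS$ is), and invoke \cite[Remark~9.2]{Dwyer/Greenlees/Iyengar:2006b} for ascent and descent of proxy smallness. The concluding application of Theorem~\ref{th:ci-local} to $\dot\vf$ likewise matches the paper's final step.
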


Thus we can replace $\vf$ by $\wt{R} \to S$ and assume that $\vf$ is surjective. At this point we can apply Theorem~\ref{th:ci-local}. 
\end{proof}

\begin{remark}
In the context of Theorem~\ref{th:global-eft}, if $\Tor[i] RSS = 0$ for $i \geq 1$, for example when $\vf$ is flat, we take $\env RS$ to be the usual tensor product $S\otimes_RS$. Then \ref{ch:HN} yields that $S$ is proxy small in $\dcat {\env RS}$ if and only if it finitely builds the Koszul complex on a finite generating set for the kernel of the multiplication map $S\otimes_RS\to S$.
\end{remark}

\begin{remark}
\label{re:general-lci}
Theorem~\ref{th:global-eft} is missing a characterization in terms of the exact functor $\dcat S\to \dcat R$, akin to the one in Theorem~\ref{th:ci-local}\ref{th:ci-local4}. It is not reasonable to expect ascent and descent of the proxy small property along maps that are not finite. There is a notion of proxy smallness with respect to a map, via the surjective part of the smooth-by-surjective factorizations in \eqref{eq:smooth-factorization}, but this is not entirely satisfactory, partly because being essentially of finite type is not a condition that can be characterized purely in terms of categorical properties of derived categories. 

These questions are of interest if one wishes to import the ideas of this paper into stable homotopy theory, where the requiring that a map of commutative ring spectra to be surjective, or essentially of finite type, is not sensible. There are many rich examples of complete intersection like behavior in that context~\cite{Benson/Greenlees/Shamir:2013}, and it would be interesting to develop a notion of ascent of proxy smallness that captures them.
\end{remark}

\section{Morphisms of schemes}
\label{se:schemes}

Let $X$ be a noetherian separated scheme, $\dcat{X}$ its derived category of quasi-coherent sheaves, and $\sdbcat{X}$ the bounded derived category of coherent sheaves. We write $\perf{X}$ for its full subcategory of perfect complexes. As in the affine case, these are precisely the small objects in $\dcat{X}$; see \cite[Proposition~1.1]{Thomason:1991}. The derived tensor product induces an action of $\perf{X}$ on $\dcat{X}$, as well as on $\sdbcat{X}$. 

\begin{chunk}
A \emph{thick $\otimes$-ideal} $\cat C$ of $\dcat{X}$ is thick subcategory that is closed under the action of $\perf{X}$, that is to say, when $\mcF$ is in $\cat C$ so is $\mcP\lotimes \mcF$ for any perfect complex $\mcP$. The notion of a localizing $\otimes$-ideal is the obvious one.

Given objects $\mcF,\mcG$ in $\dcat{X}$ we say that $\mcF$ \emph{finitely $\otimes$-builds} $\mcG$ if the latter is in every thick $\otimes$-ideal containing the former; and $\mcF$ \emph{$\otimes$-builds} $\mcG$ is each localizing $\otimes$-ideal containing $\mcF$ also contains $\mcG$. For a commutative noetherian ring $R$ and the associated scheme $X\colonequals \spec(R)$, $\otimes$-building coincides with the notion of building discussed earlier, for every thick subcategory of $\dcat{X}$ is thick $\otimes$-ideal.

This leads to the notion of $\otimes$-proxy smallness in $\dcat{X}$. Since this is the only flavor of proxy smallness considered here, we  drop the qualifier ``$\otimes$-", and speak of proxy smallness of objects in $\dcat{X}$.
\end{chunk}

The following result, implicit in the proof of \cite[Lemma~4.1]{Stevenson:2014} often reduces questions of proxy smallness over schemes to the affine case.

\begin{lemma}
\label{le:greg}
Let $X = \cup_{i=1}^n U_i$ be a finite Zariski open cover. Then $\mcF$ in $\dcat{X}$ is proxy small if and only if its restriction $\mcF|_{U_i}$ is proxy small in $\dcat{U_i}$ for each $i$. \qed
\end{lemma}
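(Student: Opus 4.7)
The forward direction is immediate from \ref{ch:proxy-tensor-closed} applied to each restriction functor $(-)|_{U_i}\colon \dcat{X}\to \dcat{U_i}$. Such a functor is an exact tensor functor, preserves arbitrary coproducts, and sends perfect complexes to perfect complexes, being pullback along the open immersion $U_i\hookrightarrow X$. Hence a small proxy $\mathcal{K}$ for $\mcF$ in $\dcat{X}$ restricts to a small proxy $\mathcal{K}|_{U_i}$ for $\mcF|_{U_i}$ in each $\dcat{U_i}$.

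For the converse, suppose each $\mcF|_{U_i}$ has a small proxy $\mathcal{K}_i\in \perf{U_i}$. Because the support of a proxy small complex is closed (the scheme analog of \ref{ch:supp}), each intersection $\supp{X}{\mcF}\cap U_i = \supp{U_i}{\mcF|_{U_i}}$ is closed in $U_i$, so $Z\colonequals \supp{X}{\mcF}$ is closed in $X$. Our plan is to exhibit a perfect complex $\mcP\in \perf{X}$ with $\supp{X}{\mcP} = Z$ lying in the thick $\otimes$-ideal of $\dcat{X}$ generated by $\mcF$; granted this, the scheme analog of \ref{ch:Neeman} (support-based classification of localizing tensor ideals on $X$) forces $\mcP$ to $\otimes$-build $\mcF$, so $\mcP$ will be a small proxy for $\mcF$.

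To construct $\mcP$, we apply Thomason's extension theorem \cite[Lemma~5.5.1]{Thomason:1991} to lift each $\mathcal{K}_i$, up to a shift summand, to a perfect complex $\widetilde{\mathcal{K}}_i\in \perf{X}$; combined with Thomason's classification of thick $\otimes$-ideals of $\perf{X}$ by closed subsets \cite[Theorem~4.1]{Thomason:1997}, this produces a perfect complex $\mcP$ on $X$ with $\supp{X}{\mcP}=Z$. The main obstacle is then to verify that $\mcF$ finitely $\otimes$-builds $\mcP$ in $\dcat{X}$. We plan to reduce this, via a local-to-global principle for thick $\otimes$-ideals on the finite cover $\{U_i\}$, to checking that $\mcF|_{U_i}$ finitely $\otimes$-builds $\mcP|_{U_i}$ in each $\dcat{U_i}$. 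The latter step is then immediate: on $U_i$, both $\mcP|_{U_i}$ and $\mathcal{K}_i$ are perfect with common support $Z\cap U_i$, so \cite[Theorem~4.1]{Thomason:1997} yields that $\mathcal{K}_i$ finitely $\otimes$-builds $\mcP|_{U_i}$, and transitivity with the hypothesis that $\mcF|_{U_i}$ finitely $\otimes$-builds $\mathcal{K}_i$ closes the argument. Establishing the local-to-global principle, which glues thick $\otimes$-containment across $\{U_i\}$, is the technical heart and is handled in \cite[Lemma~4.1]{Stevenson:2014} via Mayer-Vietoris reasoning on the open cover.
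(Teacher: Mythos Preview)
The paper does not give its own proof: the lemma is stated with a \texttt{qed} and attributed to being ``implicit in the proof of \cite[Lemma~4.1]{Stevenson:2014}''. Your sketch is a reasonable reconstruction of such an argument, and you ultimately defer the crucial step---the local-to-global principle for finite $\otimes$-building across a finite open cover---to the same source, so there is no real divergence to compare.

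One imprecision worth fixing: Thomason's extension theorem, as you invoke it, produces perfect lifts $\widetilde{\mathcal K}_i$ on $X$ restricting to $\mathcal K_i\oplus\shift\mathcal K_i$ on $U_i$, but it says nothing about their support on $X\setminus U_i$; in particular $\supp{X}{\widetilde{\mathcal K}_i}$ need not lie in $Z$, so it is not clear how ``combined with Thomason's classification'' this yields a perfect $\mcP$ with support exactly $Z$. Two clean fixes: either use the support-controlled form of Thomason's extension (a perfect complex on $U_i$ supported in $Z\cap U_i$ extends, after adding a shift, to a perfect complex on $X$ supported in $Z$), after which $\mcP\colonequals\bigoplus_i\widetilde{\mathcal K}_i$ visibly has support $Z$; or simply invoke directly the standard fact that on a noetherian separated scheme every closed subset arises as the support of some perfect complex, and dispense with the lifts $\widetilde{\mathcal K}_i$ altogether, since you never use them again. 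The rest of your outline---closedness of $Z$, the use of the classification of localizing $\otimes$-ideals to get that $\mcP$ $\otimes$-builds $\mcF$, and the local verification that $\mcF|_{U_i}$ finitely $\otimes$-builds $\mcP|_{U_i}$ via the Hopkins--Neeman--Thomason classification---is correct.
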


\begin{chunk}
A quasi-compact scheme $X$ is \emph{locally complete intersection} if there exists a finite open cover $X = \cup_{i=1}^n U_i$, such that $U_i$ is isomorphic to $\spec(R_i)$ where $R_i$ is a locally complete intersection ring, in the sense of \ref{ch:ci-rings}.
\end{chunk}

Given the definition of locally complete intersection schemes and Lemma~\ref{le:greg}, the following result is an immediate consequence of Corollary~\ref{cor:josh}.

\begin{theorem}
\label{th:lci-spa}
Let $X$ be a noetherian separated scheme. Then the following conditions are equivalent
\begin{enumerate}[\rm(1)]
\item $X$ is locally complete intersection;
\item Each object in $\sdbcat{X}$ is proxy small. \qed
\end{enumerate}
\end{theorem}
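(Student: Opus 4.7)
The plan is to split into the two implications and reduce each one to the affine case via Lemma~\ref{le:greg} and Corollary~\ref{cor:josh}.

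For the forward direction $(1)\Rightarrow(2)$: by the definition of locally complete intersection schemes, pick a finite affine cover $X=\bigcup_{i=1}^n U_i$ with $U_i\cong\spec(R_i)$ and each $R_i$ a locally complete intersection ring. Given $\mcF$ in $\sdbcat{X}$, the restriction $\mcF|_{U_i}$ corresponds under the standard equivalence $\sdbcat{U_i}\simeq \dbcat{R_i}$ to an object of $\dbcat{R_i}$, which is proxy small in $\dcat{R_i}$ by Corollary~\ref{cor:josh}. Applying Lemma~\ref{le:greg} then gives that $\mcF$ itself is proxy small in $\dcat{X}$.

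For the reverse direction $(2)\Rightarrow(1)$: choose any finite affine open cover $X=\bigcup_{i=1}^n U_i$ with $U_i\cong\spec(R_i)$, which exists because $X$ is noetherian and separated. To conclude that $X$ is locally complete intersection it suffices to show each $R_i$ is a locally complete intersection ring, and by Corollary~\ref{cor:josh} this in turn reduces to verifying that every $M$ in $\dbcat{R_i}$ is proxy small in $\dcat{R_i}$. Fix such an $M$ and let $\wt M$ be the associated object of $\sdbcat{U_i}$. Since $U_i\hookrightarrow X$ is a quasi-compact open immersion between noetherian schemes, we may lift $\wt M$ to an object $\mcG$ of $\sdbcat{X}$ with $\mcG|_{U_i}\simeq \wt M$; this is the standard extension of coherent complexes on open subschemes of noetherian schemes. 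By hypothesis $\mcG$ is proxy small in $\dcat{X}$, so Lemma~\ref{le:greg} yields that $\wt M$ is proxy small in $\dcat{U_i}$, and hence $M$ is proxy small in $\dcat{R_i}$, completing the proof.

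The one subtle step is extending a bounded complex of coherent sheaves from $U_i$ to $X$; for a single coherent sheaf this is classical (any coherent sheaf on an open subscheme of a noetherian scheme extends to a coherent sheaf on the whole scheme), and the complex version follows by an inductive argument on the length of the complex, or alternatively by invoking a Thomason-style approximation statement identifying $\sdbcat{U_i}$ with a localization of $\sdbcat{X}$. Aside from this, the argument is a direct transcription of the affine statement via the local character of proxy smallness encoded in Lemma~\ref{le:greg}.
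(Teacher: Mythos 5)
Your proof is correct and follows essentially the same route as the paper, which simply declares the theorem an immediate consequence of Lemma~\ref{le:greg} and Corollary~\ref{cor:josh} --- exactly your reduction of both implications to the affine case. The only point you supply that the paper leaves implicit is the extension of a bounded coherent complex from $U_i$ to all of $X$ in the direction (2)$\Rightarrow$(1), and your handling of that step (classical extension of coherent sheaves plus induction on amplitude, or the identification of $\sdbcat{U_i}$ as a Verdier quotient of $\sdbcat{X}$) is correct.
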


In the same vein, Theorem~\ref{th:global-eft} readily implies the following global statement.

\begin{theorem}
\label{th:lci-spa-maps}
Let $f\colon Y\to X$ be a flat, essentially of finite type, separated morphism of noetherian schemes, and $\delta \colon Y \to { Y}\times_{X} {Y}$ the diagonal embedding. Then $f$ is locally complete intersection if and only if $\delta_*{\spa O}_Y$ is proxy small in $\dcat{{Y}\times_{X} {Y}}$. \qed
\end{theorem}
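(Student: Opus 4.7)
The plan is to reduce to the affine case established in Theorem~\ref{th:global-eft} via a Zariski-local argument, patching proxy smallness across a suitable finite open cover of $Y\times_X Y$ by means of Lemma~\ref{le:greg}.

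First I would note that, since $f$ is separated, $\delta$ is a closed immersion, so $W\colonequals (Y\times_X Y)\setminus \delta(Y)$ is open and $\delta_*\spa O_Y|_W=0$. Then choose a finite affine open cover $\{V_\alpha=\spec R_\alpha\}$ of $X$ and, for each $\alpha$, a finite affine open cover $\{U_{\alpha,\beta}=\spec S_{\alpha,\beta}\}$ of $f^{-1}(V_\alpha)$. By hypothesis every induced ring map $R_\alpha\to S_{\alpha,\beta}$ is flat and essentially of finite type, and $f$ is locally complete intersection if and only if each such map is.

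The key construction is the finite Zariski open cover
\[
Y\times_X Y \;=\; W\,\cup\, \bigcup_{\alpha,\beta} U_{\alpha,\beta}\times_{V_\alpha} U_{\alpha,\beta},
\]
in which each diagonal piece is the affine scheme $\spec(S_{\alpha,\beta}\otimes_{R_\alpha}S_{\alpha,\beta})$. This really is a cover: a point of $\delta(Y)$ has the form $\delta(y)$ with $y$ in some $U_{\alpha,\beta}$, and then automatically $\delta(y)\in U_{\alpha,\beta}\times_{V_\alpha}U_{\alpha,\beta}$; all other points lie in $W$. Flatness of $f$ yields $\Tor[i]{R_\alpha}{S_{\alpha,\beta}}{S_{\alpha,\beta}}=0$ for $i\ge 1$, so the ordinary tensor product already represents the derived enveloping algebra, as in the remark following Theorem~\ref{th:global-eft}.

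Finally, applying Lemma~\ref{le:greg} to this cover reduces proxy smallness of $\delta_*\spa O_Y$ in $\dcat{Y\times_X Y}$ to proxy smallness of its restriction to each piece. The restriction to $W$ vanishes, hence is trivially proxy small; the restriction to $\spec(S_{\alpha,\beta}\otimes_{R_\alpha}S_{\alpha,\beta})$ is the module $S_{\alpha,\beta}$ viewed via its multiplication map, and by Theorem~\ref{th:global-eft} this is proxy small if and only if $R_\alpha\to S_{\alpha,\beta}$ is locally complete intersection. Combining the equivalences yields the result. The argument is essentially routine; the only point requiring a little care is verifying that the diagonal affine pieces together with $W$ really do cover $Y\times_X Y$, which is ensured because the support $\delta(Y)$ of $\delta_*\spa O_Y$ is itself covered by the diagonal pieces.
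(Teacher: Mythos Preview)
Your argument is correct and matches the paper's intended route: the paper does not spell out a proof (the theorem is stated with a bare \qed, prefaced by ``In the same vein, Theorem~\ref{th:global-eft} readily implies the following global statement''), but the natural way to make this precise is exactly what you do---use Lemma~\ref{le:greg} on a finite cover of $Y\times_X Y$ by affine diagonal pieces together with the off-diagonal complement, and invoke Theorem~\ref{th:global-eft} on each affine piece. Your inclusion of $W$ to handle the off-diagonal points is the right device and is the only step requiring any thought.
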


It seems likely that using the machinery of derived algebraic geometry one can extend the preceding result to maps of finite flat dimension, with the fiber product replaced by the derived fiber product.

\providecommand{\bysame}{\leavevmode\hbox to3em{\hrulefill}\thinspace}
\providecommand{\MR}{\relax\ifhmode\unskip\space\fi MR }

\providecommand{\MRhref}[2]{
  \href{http://www.ams.org/mathscinet-getitem?mr=#1}{#2}
}
\providecommand{\href}[2]{#2}

\end{document}